\documentclass[a4paper,oneside,reqno]{amsart}

\calclayout

\usepackage{etoolbox}

\newtoggle{default}\toggletrue{default}
\newtoggle{birk}\togglefalse{birk}
\newtoggle{birk_t2}\togglefalse{birk_t2}
\newtoggle{siam}\togglefalse{siam}
\newtoggle{springer}\togglefalse{springer}

\newtoggle{endwithsymbol}\togglefalse{endwithsymbol}

\toggletrue{default}
\toggletrue{endwithsymbol}

\usepackage[utf8]{inputenc}
\usepackage[english]{babel}
\usepackage{microtype} 

\usepackage{amsmath,amssymb,amsthm}
\usepackage{thmtools} 

\usepackage{hyperref} 
\hypersetup{hidelinks}
\usepackage[nameinlink,sort]{cleveref} 
\usepackage{orcidlink}

\usepackage{mathtools} 
\usepackage{upgreek}
\usepackage{accents} 
\usepackage{stmaryrd} 

\usepackage{tikz} 
\usetikzlibrary{cd}
\usetikzlibrary{calc}

\usepackage{dsfont} 
\usepackage{pifont} 

\usepackage{ifthen}
\usepackage{enumitem}
\setlist[itemize]{labelindent=\parindent,leftmargin=*,itemsep=3pt,topsep=5pt}
\setlist[enumerate]{label=\textup{(\roman{*})},itemsep=3pt,labelindent=0pt,leftmargin=*}

\usepackage{nicematrix}

\usepackage{graphicx} 
\usepackage{subcaption}

\usepackage{booktabs} 
\usepackage{esint} 

\usepackage{bbm}

\usepackage{color}

\usepackage{pgfpages} 
\usepackage{pgfplots} 
\pgfplotsset{compat=1.13} 

\definecolor{NathanaelColor}{rgb}{0.2,0.0,0.6}

\newcommand{\R}{\mathbb{R}} 
\newcommand{\C}{\mathbb{C}} 

\newcommand{\Cc}[1][\infty]{\mathrm{C}_{\mathrm{c}}\ifthenelse{\equal{#1}{}}{}{^{#1}}}
\newcommand{\Lp}[2][]{\mathrm{L}_{#2\ifthenelse{\equal{#1}{}}{}{,#1}}} 

\newcommand{\iu}{\mathrm{i}} 
\newcommand{\e}{\mathrm{e}}

\DeclareMathOperator{\ran}{ran}

\DeclareMathOperator{\dom}{dom}
\DeclareMathOperator{\diag}{diag}

\DeclareMathOperator{\curl}{curl}

\DeclarePairedDelimiterX{\dset}[2]{\{}{\}}{#1\,\delimsize\vert\,\mathopen{} #2}
\DeclarePairedDelimiterX{\scprod}[2]{\langle}{\rangle}{#1,#2}

\renewcommand{\Re}{\operatorname{Re}}
\renewcommand{\Im}{\operatorname{Im}}

\theoremstyle{plain}
\newtheorem{theorem}{Theorem}[section]
\newtheorem{lemma}[theorem]{Lemma}
\newtheorem{proposition}[theorem]{Proposition}

\newtheorem{hypothesis}[theorem]{Hypothesis}

\iftoggle{endwithsymbol}{%
    \declaretheorem[style=definition,sibling=theorem,qed=\ding{169}]{definition}
    \declaretheorem[style=definition,sibling=theorem,qed=\ding{169}]{example}
    \declaretheorem[style=definition,sibling=theorem,qed=\ding{169}]{problem}
    \declaretheorem[style=definition,sibling=theorem,qed=\ding{169}]{assumption}
    \declaretheorem[style=definition,numbered=no,qed=\ding{169}]{claim}

    \declaretheorem[style=remark,sibling=theorem,qed=\ding{169}]{remark}
  }%
  {%
    \theoremstyle{definition}

    \theoremstyle{remark}
    \newtheorem{remark}[theorem]{Remark}
  }%

\begin{document}

\title[Exponential Stability for Maxwell]{Exponential Stability for Maxwell-type Systems Revisited} 

%

%
%

\author[M.~Waurick]{Marcus Waurick\,\orcidlink{0000-0003-4498-3574}}
\email{marcus.waurick@math.tu-freiberg.de}

\address{TU Bergakademie Freiberg \\
  Institute of Applied Analysis \\
  Akademiestrasse 6 \\
  D-09596 Freiberg \\
  Germany}

\date{\today}
\dedicatory{}

\keywords{exponential stability, evolutionary equations, full damping, closed range, Maxwell's equations}


\ifboolexpr{togl{birk_t2} or togl{birk}}{%
\subjclass{}%
}{%
\subjclass[2020]{ Primary:
35L02, 35L50, 35B35 Secondary:
 35Q61 }%
}%


\begin{abstract} Considering a two-by-two block operator matrix system of Maxwell type, we present an elementary way of deducing exponential stability under minimal smoothness (and boundedness) requirements of the underlying domains when applications are concerned. The approach is based on resolvent estimates using block operator matrices. \end{abstract}

\ifboolexpr{togl{default} or togl{birk_t2} or togl{birk}}{\maketitle}{}%

 \section*{Acknowledgments}
  The present research is based on a review I had to provide for reference \cite{EKL24}. In this paper a similar problem was considered and my report was too harsh in assessing that research. Due to a reasonable editor, the research was still accepted. Nonetheless, it might make sense to provide the elementary functional analytic perspective, I had in mind tackling the problem alluded to in \cite{EKL24}.
  
\section{Introduction}\label{sec:intro}

This note is devoted to the study of exponential stability of systems of the form
\begin{equation}\label{eq:absMax}
\begin{cases}  \big(\partial_t \begin{pmatrix} \alpha & 0 \\ 0 & \beta \end{pmatrix} + \begin{pmatrix} \gamma & 0 \\ 0 & 0 \end{pmatrix}+ \begin{pmatrix} 0 & -C^* \\ C & 0 \end{pmatrix}\big)U = \begin{pmatrix} 0 \\ 0 \end{pmatrix}, & \text{ on }(0,\infty)\times H_0\times H_1 \\
U(0) = \begin{pmatrix} u_0 \\ v_0 \end{pmatrix} \in H_0\times H_1 \end{cases}
\end{equation}
where $H_0,H_1$ are Hilbert spaces, $\alpha=\alpha^*\in L(H_0)$, $\beta=\beta^* \in L(H_1)$, $\gamma\in L(H_0)$ be bounded linear operators. Assume that $2\Re \gamma=\gamma+\gamma^*\geq 2c$ for some $c>0$ in the sense of positive definiteness. Furthermore, $C\colon \dom(C)\subseteq H_0\to H_1$ is closed and densely defined and $C^*$ is the corresponding Hilbert space adjoint. Well-posedness of \cref{eq:absMax} can guaranteed using $C_0$-semigroups or the notion of evolutionary equations. We will present the semi-group perspective below and thus may employ the notion of mild solutions also in the subsequent theorem statement.

In the following we provide an operator-theoretic proof of the following:
\begin{theorem}[{{\cite{EKL24} or \cite{Tro15,DIW24}}}]\label{thm:expstab} Let $(u_0,v_0)\in (H_0,\beta^{-1}\ran(C))$. Then there is $\delta>0$ such that for any mild solution $U \in C[0,\infty;H_0\times H_1)$ of \cref{eq:absMax}, we have
\[
   \|U(t)\|_{H_0\times H_1}\leq \e^{-\delta t}\|(u_0,v_0)\|_{H_0\times H_1}.
\]
\end{theorem}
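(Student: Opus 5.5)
The plan is to prove the statement via the Gearhart--Prüss theorem, applied to the semigroup restricted to a suitable invariant closed subspace. I work under the standing assumptions this requires and which I take to be the paper's intended hypotheses: $\alpha\geq c_0$ and $\beta\geq c_0$ for some $c_0>0$, and $\ran(C)$ closed in $H_1$. Since the statement fixes the constant in front of $\e^{-\delta t}$ to be $1$, I also measure $\|U(t)\|$ in the energy norm $\langle M\cdot,\cdot\rangle^{1/2}$ with $M=\diag(\alpha,\beta)$. This norm is equivalent to the original one, and in the original norm one gets a constant $K\geq 1$ in front.

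\textbf{Step 1: semigroup and invariant subspace.} Put $\Gamma=\diag(\gamma,0)$ and
\[
A=\begin{pmatrix}0&-C^*\\ C&0\end{pmatrix},
\]
which is skew-selfadjoint. Then $\mathcal{A}=-M^{-1}(\Gamma+A)$ generates a contraction semigroup on $H_0\times H_1$ with the energy inner product $\langle M\cdot,\cdot\rangle$, since $\Re\langle M\mathcal{A}x,x\rangle=-\Re\langle\gamma u,u\rangle\leq 0$ and $\mathcal{A}$ is a bounded perturbation of a skew-adjoint operator. The second equation reads $\beta\dot v=-Cu$, so for mild solutions $\beta v(t)-\beta v_0\in\overline{\ran(C)}=\ran(C)$. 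Hence $X:=H_0\times\beta^{-1}\ran(C)$ is a closed invariant subspace, and the restricted semigroup is again a contraction $C_0$-semigroup with generator $\mathcal{A}_X$. By Gearhart--Prüss it then suffices to show two things: $i\R\subseteq\rho(\mathcal{A}_X)$, and $\sup_{\omega\in\R}\|(i\omega-\mathcal{A}_X)^{-1}\|<\infty$. Both follow from an a priori estimate
\[
\|(u,v)\|\leq K\,\|(f,g)\|
\]
for solutions of
\[
(i\omega M+\Gamma+A)(u,v)=(f,g),\qquad (f,g)\in H_0\times\ran(C),\quad \beta v\in\ran(C),
\]
with $K$ independent of $\omega$. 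Surjectivity comes from $\Re$-accretivity plus a standard closedness/dissipativity argument, or from a continuity argument in $\omega$ using that the estimate yields closed range and injectivity of the adjoint-type problem.

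\textbf{Step 2: the a priori estimate.} Taking real parts of the inner product of the equation with $(u,v)$ gives
\[
c\|u\|^2\leq|\langle f,u\rangle|+|\langle g,v\rangle|.
\]
The crux is controlling $v$ uniformly in $\omega$. Closed range means $C$ restricted to $(\ker C)^\perp\cap\dom(C)$ has a bounded inverse $C^{-1}\colon\ran(C)\to(\ker C)^\perp$ with $\|C^{-1}\|\leq\kappa$. Write $\beta v=Cw$ with $w=C^{-1}\beta v$, so $\|w\|\leq\kappa\|\beta\|\|v\|$.

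Now pair the first equation, $i\omega\alpha u+\gamma u-C^*v=f$, with $w$. This gives $\langle C^*v,w\rangle=\langle v,\beta v\rangle\geq c_0\|v\|^2$. The dangerous term is $i\omega\langle\alpha u,w\rangle$. To handle it, use the second equation, $i\omega Cw=i\omega\beta v=g-Cu$. Since $g\in\ran(C)$, this yields
\[
i\omega w=C^{-1}g-Pu,
\]
where $P$ is the orthogonal projection onto $(\ker C)^\perp$. Thus
\[
|i\omega\langle\alpha u,w\rangle|\leq\|\alpha\|\|u\|\bigl(\kappa\|g\|+\|u\|\bigr),
\]
uniformly in $\omega$. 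Altogether,
\[
c_0\|v\|^2\lesssim\|u\|^2+\|u\|\|g\|+\bigl(\|f\|+\|\gamma\|\|u\|\bigr)\|v\|,
\]
and combining this with the $u$-bound via Young's inequality gives $\|(u,v)\|\leq K\|(f,g)\|$.

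\textbf{Main obstacle.} I expect the main obstacle to be this $\omega$-uniform control of $v$, which has no damping. It is precisely here that closedness of $\ran(C)$ and the restriction $\beta v,\,g\in\ran(C)$ are indispensable. Without them, $\ker C^*$-components of $\beta v$ are stationary and undamped, so no decay is possible. A secondary technical point is verifying that $i\omega-\mathcal{A}_X$ is onto $X$, not merely bounded below. I would handle this by noting that $i\omega-\mathcal{A}$ is onto $H_0\times H_1$ for all $\omega\neq0$ on the full space, being a bounded perturbation of an invertible operator for small damping perturbation. More simply, I would use that the estimate is uniform, so the set of $\omega$ with surjectivity is open and closed in $\R$. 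For $\omega=0$ I would solve directly: with $\beta v=Cw$, the system becomes $\gamma u-C^*\beta^{-1}Cw=f$, $Cu=g$, which is solvable by a Lax--Milgram argument on $(\ker C)^\perp$.
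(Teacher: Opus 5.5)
Your argument is correct. It establishes what the paper's proof establishes, exponential stability via Gearhart--Pr\"uss on $H_0\times\beta^{-1}\ran(C)$, but by a genuinely different route.

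\textbf{Comparison with the paper.} The paper first normalises $\alpha=\beta=1$. It then splits $H_0=\ran(C^*)\oplus\ker(C)$ and $H_1=\ran(C)\oplus\ker(C^*)$ and decouples the $\ker(C)$-part of $u$ by a Schur complement, whose real part stays $\geq c$ by \Cref{lem:pddtw}. On the remaining block, where $C$ is bijective, it substitutes $u_\delta=(1+\delta/z)u$ and multiplies from the left by a matrix containing $(\gamma-\delta)\frac{\delta}{z}C^{-1}$. This yields an operator with real part $\geq d$ for all $z\neq0$ with $\Re z>-d$; a neighbourhood of $z=0$ is handled by the explicit block inverse and openness of the resolvent set.

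You replace all of this by a single multiplier estimate. Testing the first equation with $w=C^{-1}\beta v$ produces $\langle\beta v,v\rangle$. The identity $\iu\omega w=C^{-1}g-Pu$ from the second equation removes the only $\omega$-dependent term. So you need no normalisation of $\alpha,\beta$, no splitting off of $\ker(C)$, and no special treatment of $\omega=0$. Both proofs rest on the same mechanism: the bounded inverse of $C$ on $\ran(C)$ transfers the damping from $u$ to $v$. The paper encodes this in a $z$-dependent transformation of the operator; you encode it in a test function.

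What the paper's version buys is accretivity, hence invertibility and a uniform bound, on a whole strip left of $\iu\R$ at once. That is what lets it carry over to evolutionary equations with $z$-dependent damping. Your estimate would extend to a thin strip as well, after absorbing the term $\Re z\,\langle\beta v,v\rangle$ with your $v$-bound. As written, though, it gives a lower bound on $\iu\R$ only, so you need an extra step for $\iu\R\subseteq\rho(\mathcal{A}_X)$. That step is fine by the standard fact that for m-dissipative $\mathcal{A}_X$ any $\iu\omega\in\sigma(\mathcal{A}_X)$ is a boundary point of the spectrum. Such a point is therefore an approximate eigenvalue, which your estimate excludes. Your open--closed argument started at $\omega=0$ also works. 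Drop the remark about a ``bounded perturbation for small damping'': the damping is not small, and that argument does not work.

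\textbf{The constant $1$.} Passing to the energy norm does not produce the constant $1$. Take $(0,v_0)\in\dom(\mathcal{A}_X)$ with $v_0\neq0$; such elements exist as soon as $C\neq0$. Then $E(t)=\langle\alpha u,u\rangle+\langle\beta v,v\rangle$ satisfies $E'(0)=-2\Re\langle\gamma u_0,u_0\rangle=0$, which is incompatible with $E(t)\leq\e^{-2\delta t}E(0)$. Gearhart--Pr\"uss (\Cref{thm:GP}) only gives $\|U(t)\|\leq K\e^{-\delta t}\|U(0)\|$ with some $K\geq1$, in the energy norm just as in the original one. That is also all the paper's proof delivers, so the statement has to be read with such a constant.
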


This theorem in its present form has been provided in \cite{EKL24}. It does however also follow from the perspective provided in \cite[Chapter 11]{STW22} rooted in \cite{Tro13,Tro15} with some minor modifications that can be found in \cite{DIW24}. Since the well-posedness approach chosen in \cite{STW22} uses evolutionary equations and the one in \cite{EKL24} $C_0$-semigroups, we feel the necessity of showing how the tools developed in \cite{STW22} can be applied here. More so, we shall specialise the general results for evolutionary equations to the present case and provide an independent proof of \Cref{thm:expstab}. However, note that the ansatz in \cite{EKL24} is similar as both approaches are based on a change of variables even though \cite{EKL24} stays in time domain and the evolutionary equations approach uses the frequency domain. In passing we note that the perspective offered here can directly be used to generalise to (time-)nonlocal equation with memory terms. We highlight the respective potential of generalisation further down below and will also state the impact of the techniques employed on evolutionary equations in the sense of Picard, see \cite{Pic09} and \cite{STW22}. We emphasise that this small note is providing an elementary functional analytic approach towards exponential stability of the above equation. It is by no means an alternative to results requiring only partial damping, see the classic \cite{Leb96}, where partial damping can imply exponential stability. The present manuscript does stress the applicability of abstract arguments for simple situations though.

The next section, \Cref{sec:nc}, is a prerequisite confirming that we might consider the case $\alpha=1$ and $\beta=1$ without loss of generality. \Cref{sec:wp} provides the well-posedness result for \cref{eq:absMax} employing the Lumer--Phillips theorem. Whilst \Cref{sec:nc} reduces the complexity of the coefficients, \Cref{sec:red} is to reduce the complexity of the operator $C$ (and $C^*$) by using an abstract Helmholtz decomposition. This system is then diagonalised offering one equation without unbounded operators and a 2-by-2 block with off diagonal entries albeit being unbounded yet boundedly invertible. The latter system is then discussed in detail in \Cref{sec:fulldamp_abs}, where the decisive estimate is provided obtained with the help of a change of variables. While the sections before contain standard procedures carried out explictly for the reader's convenience, the contents of \Cref{sec:fulldamp_abs} does contain the actual estimate needed. It is a special case of the rationale provided in \cite{Tro15} (and also comprehensively described in \cite[Chapter 11]{STW22}). In \Cref{sec:proof} we provide a proof of \Cref{thm:expstab} and mention possible generalisations to evolutionary equations. \Cref{sec:fulldamping} offers an example in the context of Maxwell's equations.

\section{Reduction to nice coefficients}\label{sec:nc}

We start off with a reformulation making the theory of $C_0$-semigroups applicable. Thus, in this section, we set the stage to confirm existence and uniqueness of solutions in the next section. The proof of this reformulation is however self-evident. Throughout, let $H_0,H_1$ be Hilbert spaces and $\alpha=\alpha^*, \gamma \in L(H_0)$, $\beta=\beta^*\in L(H_1)$ satisfying $\alpha, \beta \geq c$ and $\Re \gamma\geq 0$. Moreover, let $C\colon \dom(C)\subseteq H_0\to H_1$ be densely defined and closed.

\begin{lemma}\label{lem:ref} (a) Let $U\in C^1[0,\infty;H_0\times H_1)\cap C[0,\infty; \dom(C)\times \dom(C^*))$. If for all $t\in (0,\infty)$
\[
 \big(\begin{pmatrix} \alpha & 0 \\ 0 & \beta \end{pmatrix} U\big)'(t) = \big( \begin{pmatrix} \gamma & 0 \\ 0 & 0 \end{pmatrix}+ \begin{pmatrix} 0 & -C^* \\ C & 0 \end{pmatrix}\big) U(t),
\]
then \begin{multline*}
\tilde{U} \coloneqq \diag(\sqrt{\alpha},\sqrt{\beta})U  \\ \in C^1[0,\infty;H_0\times H_1)\cap C[0,\infty; \dom(\sqrt{\beta}^{-1}C\sqrt{\alpha}^{-1})\times \dom(\sqrt{\alpha}^{-1}C^*\sqrt{\beta}^{-1}))\end{multline*} and 
\[
\tilde{U}'(t) = -\big(\begin{pmatrix} \sqrt{\alpha}^{-1}\gamma\sqrt{\alpha}^{-1} & 0 \\ 0 & 0 \end{pmatrix}+ \begin{pmatrix} 0 & -\sqrt{\alpha}^{-1}C^*\sqrt{\beta}^{-1} \\ \sqrt{\beta}^{-1}C\sqrt{\alpha}^{-1} & 0 \end{pmatrix}\big)\tilde{U}(t)\quad(t\in (0,\infty)).
\]
(b) Let $\tilde{U}  \in C^1[0,\infty;H_0\times H_1)\cap C[0,\infty; \dom(\sqrt{\beta}^{-1}C\sqrt{\alpha}^{-1})\times \dom(\sqrt{\alpha}^{-1}C^*\sqrt{\beta}^{-1}))$. If for all $t\in (0,\infty)$
\[
\tilde{U}'(t) = -\big(\begin{pmatrix} \sqrt{\alpha}^{-1}\gamma\sqrt{\alpha}^{-1} & 0 \\ 0 & 0 \end{pmatrix}+ \begin{pmatrix} 0 & -\sqrt{\alpha}^{-1}C^*\sqrt{\beta}^{-1} \\ \sqrt{\beta}^{-1}C\sqrt{\alpha}^{-1} & 0 \end{pmatrix}\big)\tilde{U}(t),
\]
then $U\coloneqq \diag(\sqrt{\alpha}^{-1},\sqrt{\beta}^{-1})\tilde{U}\in C^1[0,\infty;H_0\times H_1)\cap C[0,\infty; \dom(C)\times \dom(C^*))$
and
\[
 \big(\begin{pmatrix} \alpha & 0 \\ 0 & \beta \end{pmatrix} U\big)'(t) = \big( \begin{pmatrix} \gamma & 0 \\ 0 & 0 \end{pmatrix}+ \begin{pmatrix} 0 & -C^* \\ C & 0 \end{pmatrix}\big) U(t).
\]
\end{lemma}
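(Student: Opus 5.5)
The plan is a direct change of variables, using only that $\alpha$ and $\beta$ are bounded, selfadjoint and uniformly positive. First, since $\alpha=\alpha^*\geq c>0$, the continuous functional calculus gives a selfadjoint square root $\sqrt{\alpha}\in L(H_0)$ with $\sqrt{\alpha}\geq\sqrt{c}$. Hence $\sqrt{\alpha}$ is boundedly invertible and $\sqrt{\alpha}^{-1}\in L(H_0)$ is again selfadjoint. The same holds for $\beta$ on $H_1$.

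Next I would identify the domains. By the definition of a product of operators,
\[
x\in\dom(\sqrt{\beta}^{-1}C\sqrt{\alpha}^{-1}) \iff \sqrt{\alpha}^{-1}x\in\dom(C).
\]
So $\sqrt{\alpha}$ maps $\dom(C)$ bijectively onto $\dom(\sqrt{\beta}^{-1}C\sqrt{\alpha}^{-1})$, and likewise $\sqrt{\beta}$ maps $\dom(C^*)$ bijectively onto $\dom(\sqrt{\alpha}^{-1}C^*\sqrt{\beta}^{-1})$. The graph norms are equivalent under these maps. For example,
\[
\|\sqrt{\alpha}u\|+\|\sqrt{\beta}^{-1}C\sqrt{\alpha}^{-1}\sqrt{\alpha}u\| = \|\sqrt{\alpha}u\|+\|\sqrt{\beta}^{-1}Cu\|,
\]
which is comparable to $\|u\|+\|Cu\|$ because all four operators $\sqrt{\alpha}^{\pm1}$, $\sqrt{\beta}^{\pm1}$ are bounded. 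Consequently $\diag(\sqrt{\alpha},\sqrt{\beta})$ is a topological isomorphism from $\dom(C)\times\dom(C^*)$ onto the transformed domain, each equipped with its graph norm. Continuity of $U$ into the former is therefore equivalent to continuity of $\tilde U$ into the latter. Membership in $C^1[0,\infty;H_0\times H_1)$ is preserved because constant bounded operators commute with differentiation.

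For the equation in (a), I would use $(\diag(\alpha,\beta)U)'=\diag(\alpha,\beta)U'$ and write
\[
\tilde U'=\diag(\sqrt{\alpha},\sqrt{\beta})U' = \diag(\sqrt{\alpha}^{-1},\sqrt{\beta}^{-1})\big(\diag(\alpha,\beta)U\big)'.
\]
I would then insert the assumed equation and substitute $U=\diag(\sqrt{\alpha}^{-1},\sqrt{\beta}^{-1})\tilde U$. This yields entrywise the blocks $\sqrt{\alpha}^{-1}\gamma\sqrt{\alpha}^{-1}$, $-\sqrt{\alpha}^{-1}C^*\sqrt{\beta}^{-1}$ and $\sqrt{\beta}^{-1}C\sqrt{\alpha}^{-1}$. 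The overall sign is carried through unchanged, consistently with the convention of \cref{eq:absMax}, where everything except $\partial_t$ sits on the left-hand side. Part (b) is the reverse computation: apply $\diag(\sqrt{\alpha}^{-1},\sqrt{\beta}^{-1})$, use the domain identification above in the opposite direction, and multiply by $\diag(\alpha,\beta)$.

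The only point needing care, and the step I expect to be the main obstacle, is the domain bookkeeping. One must check that the operators in the transformed equation are genuinely applied within their domains, i.e. that $\sqrt{\alpha}^{-1}\tilde u(t)\in\dom(C)$, and that continuity with respect to the graph norms transfers. Both points follow from the bijection and norm equivalence established in the second step.
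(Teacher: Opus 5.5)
Your argument is correct and is the direct substitution the paper has in mind; the paper gives no proof and calls the reformulation self-evident. Your identification $\dom(\sqrt{\beta}^{-1}C\sqrt{\alpha}^{-1})=\sqrt{\alpha}\,\dom(C)$, with equivalent graph norms, is exactly the bookkeeping required.

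One point needs to be stated outright rather than hidden in ``consistently with the convention''. As printed, both (a) and (b) carry no minus sign on one side and a minus sign on the other. Since your computation preserves the sign, the literal statement fails: take $H_0=H_1=\C$, $C=0$, $\alpha=\beta=\gamma=1$; then $u(t)=\e^{t}$ satisfies the hypothesis of (a) but not its conclusion. What you actually prove is the corrected version with $-(\ldots)$ in both hypothesis and conclusion, which is what \cref{eq:absMax} and the proof of \Cref{thm:mt} use.
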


\begin{remark}\label{rem:easy} In the situation of the previous lemma, the following statements hold, which we present here without a proof. Note that by changing the scalar-product appropriately $\sqrt{\alpha}$ and $\sqrt{\beta}$ (and their respective inverses) can be established as unitary operators.

\begin{enumerate}
\item[(a)] We have 
\[
   \sqrt{\alpha}^{-1}C^*\sqrt{\beta}^{-1} =    \sqrt{\beta}^{-1}C\sqrt{\alpha}^{-1}.
\]
\item[(b)] Let $\gamma\in L(H_0)$. Then $\Re \gamma\geq 0$ if and only if $\Re  \sqrt{\alpha}^{-1}\gamma\sqrt{\alpha}^{-1}\geq 0$. Moreover,
\[
  \exists c>0\colon \Re \gamma\geq c \iff \exists c>0\colon \Re \sqrt{\alpha}^{-1}\gamma\sqrt{\alpha}^{-1}\geq c.
\]
\item[(c)] $\ran(C)\subseteq H_1$ is closed if and only if $\ran( \sqrt{\beta}^{-1}C\sqrt{\alpha}^{-1})\subseteq H_1$ is closed.
\item[(d)] Let $v_0\in H_1$. Then $v_0\in \beta^{-1}\ran(C)$ if and only if $\tilde{v}_0\coloneqq \sqrt{\beta}v_0 \in \sqrt{\beta}^{-1}\ran(C)=\ran(\sqrt{\beta}^{-1}C\sqrt{\alpha}^{-1})$.
\item[(e)] Stability estimates of the form in \Cref{thm:expstab} for $U$ yield the corresponding one for $\tilde{U}$ and vice versa. \qedhere
\end{enumerate}
\end{remark}

As a consequence, as long as we are staying in the abstract Hilbert space setting, we may assume without loss of generality, that $\alpha=1$ and $\beta=1$. 

Next, we turn to existence and uniqueness of the above solutions.

\section{Well-posedness and the path to exponential stability}\label{sec:wp}

Throughout this section, let $H_0$, $H_1$ be Hilbert spaces, $\gamma\in L(H_0)$ be bounded with $\Re \gamma\geq 0$ and $C\colon \dom(C)\subseteq H_0\to H_1$ be densely defined and closed.

\begin{remark}\label{rem:mdiss} Let $H$ be a Hilbert space and $\eta\in L(H)$ such that $\Re \eta\geq 0$ and $A\colon \dom(A)\subseteq H\to H$ be skew-selfadjoint. Then $B\coloneqq -(\eta+A)$ is m-dissipative. Indeed, the dissipativity is easy to verify. Next, $B^*=-\eta^*+A$ is also m-dissipative. As $B$ is closed, it follows that $B$ is m-dissipative.
\end{remark}
The latter remark thus provides an elementary example class of operators generating a $C_0$-semigroup of contractions by the Lumer--Phillips theorem.
\begin{proposition}\label{prop:genthm} The operator
\[
   B\coloneqq -\begin{pmatrix} \gamma & 0 \\ 0 & 0\end{pmatrix} - \begin{pmatrix} 0 & -C \\ C^* & 0\end{pmatrix} 
\]
is m-dissipative and, hence, generates a $C_0$-semigroup of contractions.
\end{proposition}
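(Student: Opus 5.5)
The plan is to reduce the statement to \Cref{rem:mdiss}. I take $H\coloneqq H_0\times H_1$ and $\eta\coloneqq\diag(\gamma,0)\in L(H)$. I also read the unbounded block in the way it has to be read for $B$ to act on $H_0\times H_1$, namely
\[
A\coloneqq\begin{pmatrix} 0 & -C^* \\ C & 0\end{pmatrix},\qquad \dom(A)=\dom(C)\times\dom(C^*),
\]
which is the operator appearing in \cref{eq:absMax}. The displayed ordering of $C$ and $C^*$ in the statement only changes the sign convention; the argument is the same with $C$ and $C^*$ interchanged. Since $\Re\scprod{\eta(x,y)}{(x,y)}=\Re\scprod{\gamma x}{x}\geq 0$, we get $\Re\eta\geq 0$. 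Once $A$ is shown to be skew-selfadjoint, \Cref{rem:mdiss} gives that $B=-(\eta+A)$ is m-dissipative, and the Lumer--Phillips theorem then yields the contraction semigroup.

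\textbf{Key step: $A^*=-A$.} First, $A$ is densely defined, since $C$ and $C^*$ are densely defined (the latter because $C$ is closed). For $(x,y),(u,v)\in\dom(C)\times\dom(C^*)$,
\[
\scprod{A(x,y)}{(u,v)}=-\scprod{C^*y}{u}+\scprod{Cx}{v}=-\scprod{y}{Cu}+\scprod{x}{C^*v}=\scprod{(x,y)}{-A(u,v)}.
\]
This shows $-A\subseteq A^*$. For the converse inclusion, let $(u,v)\in\dom(A^*)$ with $A^*(u,v)=(f,g)$. Testing the adjoint relation with $(x,0)$ for $x\in\dom(C)$ gives $\scprod{Cx}{v}=\scprod{x}{f}$ for all such $x$, so $v\in\dom(C^*)$ and $f=C^*v$. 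Testing with $(0,y)$ for $y\in\dom(C^*)$ gives $-\scprod{C^*y}{u}=\scprod{y}{g}$, so $u\in\dom(C^{**})$ and $g=-C^{**}u$. Because $C$ is closed, $C^{**}=C$, hence $u\in\dom(C)$ and $A^*(u,v)=-A(u,v)$. This is the only point where closedness of $C$ enters, and it is the main (though mild) obstacle: the decoupling of the adjoint into componentwise conditions must be done carefully.

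\textbf{Verifying the remark.} For completeness I would check the argument of \Cref{rem:mdiss} in this setting. Dissipativity of $B$ follows from
\[
\Re\scprod{B w}{w}=-\Re\scprod{\eta w}{w}-\Re\scprod{Aw}{w}=-\Re\scprod{\eta w}{w}\leq 0,
\]
because $A$ is skew, so $\Re\scprod{Aw}{w}=0$. Bounded perturbations do not change the domain or the adjoint structure, so $B$ is closed and densely defined with $B^*=-\eta^*+A$. The same computation shows that $B^*$ is dissipative, since $\Re\eta^*=\Re\eta\geq 0$. A closed, densely defined dissipative operator whose adjoint is also dissipative is m-dissipative, by the standard corollary of Lumer--Phillips. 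Concretely, $\ran(1-B)$ is closed by the dissipativity estimate, and its orthogonal complement is $\ker(1-B^*)=\{0\}$. The Lumer--Phillips theorem then shows that $B$ generates a $C_0$-semigroup of contractions.
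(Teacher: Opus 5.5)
Your proof is correct and follows the paper's route: you apply \Cref{rem:mdiss} with $\eta=\diag(\gamma,0)$ and the skew-selfadjoint off-diagonal block, and you verify $A^*=-A$ and the m-dissipativity argument by hand where the paper cites \cite[Proposition 6.2.3]{STW22}. Your reading of the block as $\begin{pmatrix} 0 & -C^* \\ C & 0\end{pmatrix}$ on $H_0\times H_1$ is the right one, since the displayed ordering does not type-check on that space.
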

\begin{proof}
It is elementary, see, e.g., \cite[Proposition 6.2.3]{STW22}, to show that $A= \begin{pmatrix} 0 & -C \\ C^* & 0\end{pmatrix} $ is skew-selfadjoint. Since $\eta=\begin{pmatrix} \gamma & 0 \\ 0 & 0\end{pmatrix}$ satisfies $\Re \eta\geq 0$, the assertion follows from \Cref{rem:mdiss}.
\end{proof}

For the proof of exponential stability, we use the seminal Gearhart--Pr\"uss theorem, which we formulate here for the special case of m-dissipative operators.

\begin{theorem}[\cite{Pr84}]\label{thm:GP} Let $B$ be m-dissipative on a Hilbert space $H$. Then $B$ generates an exponentially stable semi-group if and only if
\[
\iu \R \subseteq \rho(B) \text{ and }\sup_{\lambda\in \R} \|(\iu\lambda-B)^{-1}\|<\infty.\]
\end{theorem}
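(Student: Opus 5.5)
We plan to prove both implications directly, using contractivity of the semigroup $(T(t))_{t\geq0}$ generated by $B$ to keep things elementary. Contractivity is available by the Lumer--Phillips theorem, as in \Cref{prop:genthm}.

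\emph{Necessity.} Suppose $\|T(t)\|\leq M\e^{-\omega t}$ with $\omega>0$. The Laplace transform $\int_0^\infty \e^{-\mu t}T(t)x\,\mathrm{d}t$ converges for $\Re\mu>-\omega$ and equals $(\mu-B)^{-1}x$. Hence $\iu\R\subseteq\rho(B)$, and
\[
\|(\iu\lambda-B)^{-1}\|\leq \int_0^\infty M\e^{-\omega t}\,\mathrm{d}t=\frac{M}{\omega}.
\]

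\emph{Sufficiency, step 1: uniform bound on the closed right half plane.} Let $M\coloneqq\sup_{\lambda}\|(\iu\lambda-B)^{-1}\|$.
\begin{itemize}
\item For $0\leq \Re\mu< 1/(2M)$, write $\mu=\iu\lambda+\varepsilon$. A Neumann series around $\iu\lambda$ gives $\mu\in\rho(B)$ and $\|(\mu-B)^{-1}\|\leq 2M$.
\item For $\Re\mu\geq 1/(2M)$, m-dissipativity gives $\|(\mu-B)^{-1}\|\leq 1/\Re\mu\leq 2M$.
\end{itemize}
So the resolvent is bounded by $M'\coloneqq 2M$ on $\dset{\mu}{\Re\mu\geq 0}$.

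\emph{Sufficiency, step 2: a uniform $L_2$-bound in time.} Fix $x\in H$ and $\varepsilon\in(0,1]$. Since $(\varepsilon+\iu\lambda-B)^{-1}x$ is the Fourier transform of $t\mapsto \mathbb{1}_{[0,\infty)}(t)\e^{-\varepsilon t}T(t)x$, Plancherel gives
\[
\int_0^\infty \e^{-2\varepsilon t}\|T(t)x\|^2\,\mathrm{d}t=\frac{1}{2\pi}\int_\R\|(\varepsilon+\iu\lambda-B)^{-1}x\|^2\,\mathrm{d}\lambda .
\]
The resolvent identity
\[
(\varepsilon+\iu\lambda-B)^{-1}x=(1+\iu\lambda-B)^{-1}x+(1-\varepsilon)(\varepsilon+\iu\lambda-B)^{-1}(1+\iu\lambda-B)^{-1}x
\]
together with step 1 yields
\[
\int_\R\|(\varepsilon+\iu\lambda-B)^{-1}x\|^2\,\mathrm{d}\lambda\leq 2(1+M'^2)\int_\R\|(1+\iu\lambda-B)^{-1}x\|^2\,\mathrm{d}\lambda .
\]
By Plancherel again, the right-hand side equals
\[
2(1+M'^2)\,2\pi\int_0^\infty\e^{-2t}\|T(t)x\|^2\,\mathrm{d}t\leq 2\pi(1+M'^2)\|x\|^2 .
\]
This bound is uniform in $\varepsilon$. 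Letting $\varepsilon\to0$ by monotone convergence gives
\[
\int_0^\infty\|T(t)x\|^2\,\mathrm{d}t\leq K\|x\|^2,\qquad K\coloneqq 1+M'^2 .
\]

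\emph{Sufficiency, step 3: a Datko--Pazy argument using contractivity.} For $s\leq t$ we have $\|T(t)x\|=\|T(t-s)T(s)x\|\leq\|T(s)x\|$. Therefore
\[
t\|T(t)x\|^2\leq\int_0^t\|T(s)x\|^2\,\mathrm{d}s\leq K\|x\|^2,
\]
so $\|T(t_0)\|\leq 1/2$ for $t_0\coloneqq 4K$. The semigroup property then gives $\|T(t)\|\leq 2\e^{-(\ln 2/t_0)t}$ for all $t\geq0$, which is exponential stability.

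The main obstacle is step 2: the resolvent estimate has to be converted into a time-domain bound that is uniform as the line $\Re\mu=\varepsilon$ approaches the imaginary axis. The resolvent identity comparing with the line $\Re\mu=1$ is the device that makes this work; step 1 supplies exactly the half-plane bound it needs. The remaining steps are routine.
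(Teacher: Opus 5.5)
Your proof is correct and complete. The paper gives no proof of \Cref{thm:GP}. It quotes Pr\"uss's theorem, whose general form (arbitrary $C_0$-semigroups on Hilbert spaces) assumes a uniform resolvent bound on the open right half-plane, and restates it for m-dissipative $B$. That restatement tacitly relies on your Step~1: the Neumann series in the strip $0\leq\Re\mu<1/(2M)$, and $\|(\mu-B)^{-1}\|\leq 1/\Re\mu$ to the right of it. There you make explicit what the paper leaves implicit.

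The real difference is that you reprove the core implication instead of citing it, and contractivity makes this much cheaper than in general:
\begin{enumerate}
\item Since $\e^{-\varepsilon t}T(t)x$ is square integrable for every $\varepsilon>0$, you can apply Plancherel on lines arbitrarily close to $\iu\R$ and conclude by monotone convergence.
\item The comparison line $\Re\mu=1$ comes with the free bound $\int_0^\infty\e^{-2t}\|T(t)x\|^2\,\mathrm{d}t\leq\frac12\|x\|^2$.
\item Datko's lemma reduces to the monotonicity estimate $t\|T(t)x\|^2\leq\int_0^t\|T(s)x\|^2\,\mathrm{d}s$. The uniform constant $K$ comes directly from your estimate, so the uniform boundedness principle is not needed.
\end{enumerate}
For a non-contractive semigroup with growth bound $\omega_0\geq0$, $\e^{-\varepsilon t}T(t)x$ need not be square integrable for small $\varepsilon$. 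The line of integration then cannot simply be pushed to $\iu\R$, and the standard proofs need an extra device such as a complex inversion formula with a contour shift.

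So the citation gives the general non-contractive theorem, which is more than the statement asks for. Your route gives a self-contained argument with explicit constants, namely $\|T(t)\|\leq 2\e^{-\omega t}$ with $\omega=\ln 2/(4(1+4M^2))$ and $M=\sup_{\lambda\in\R}\|(\iu\lambda-B)^{-1}\|$. It also shows clearly where the Hilbert space structure enters, namely vector-valued Plancherel, which is what breaks down for the Banach-space analogue.
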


Thus, we aim to find estimates so that we may apply the Gearhart--Pr\"uss Theorem. For this, we reformulate the resolvent.

\section{A reduction procedure}\label{sec:red}

The following section addresses only a prototype situation, which leads us to the general result anyway. We adopt the notation of the previous section.
\begin{hypothesis}\label{hyp:fne0p} Assume that $\alpha=1$ and $\beta=1$ and $\gamma\in L(H_0)$ such that $\Re\gamma\geq c$ for some $c>0$.\end{hypothesis}
\begin{hypothesis}\label{hyp:clr} $\ran(C)\subseteq H_1$ is closed.
\end{hypothesis}
\begin{remark}\begin{enumerate}
\item[(a)] The closed range theorem, see, e.g., \cite[Theorem IV.1.2]{Gol06}, asserts that $\ran(C)\subseteq H_1$ is closed if and only if $\ran(C^*)\subseteq H_0$ is. As a consequence, \Cref{hyp:clr} implies that both $\ran(C)\subseteq H_1$ and $\ran(C^*)\subseteq H_0$ are closed.
\item[(b)] We have the abstract Helmholtz-type decompositions
\[
   H_1 = \ran(C)\oplus \ker(C^*)\text{ and }H_0 = \ran(C^*)\oplus \ker(C).
\]
\item[(c)] Referring to the FA-toolbox, e.g., in \cite{PZ20}, a sufficient condition fo $C$ to have closed range is that $\dom(C)\cap \ker(C)^\bot \hookrightarrow H_0$ is compact.
\item[(d)] To sketch the idea of (c) note that an operator to have closed range is equivalent to the validity of a \textbf{closed range inequality for $C$}, that is,
\[
\exists c>0  \forall x\in \dom(C)\cap \ker(C)^\bot \colon \|x\|\leq c\|Cx\|,
\]
see again \cite{Gol06} or, see also \cite{PW26} for more details and suitable references in this context.\qedhere
\end{enumerate}
\end{remark}

We introduce $\iota_1 \colon \ran(C)\hookrightarrow H_1$, the continuous embedding. If \Cref{hyp:clr} holds, then $\iota_1^*\colon H_1\to \ran(C)$ is the (surjective) orthogonal projection. We denote by $\kappa_1\colon \ker(C^*)\hookrightarrow H_1$ the canonical embedding. Similarly, we let $\iota_0\colon \ran(C^*)\hookrightarrow H_0$ and $\kappa_0\colon \ker(C)\hookrightarrow H_0$.

Moreover, we define for $z\in \C$
\[
   B_z \coloneqq z + \begin{pmatrix} \gamma & 0 \\ 0 & 0 \end{pmatrix} +\begin{pmatrix} 0 & C^* \\ -C & 0 \end{pmatrix}.
\]
\begin{theorem}\label{thm:soldecom} Assume \Cref{hyp:fne0p} and \Cref{hyp:clr} hold. Let $f\in H_0, g\in \ran(C)$, $z\in \C$, $(u,v)\in \dom(C)\times \dom(C^*)\cap \ran(C)$. Then the following conditions are equivalent:
\begin{enumerate}
\item[(i)] $B_z(u,v)=(f,g)$.
\item[(ii)] $u=\iota_0\iota^*_0 u+  \kappa_0\kappa^*_0 u$ and $v=\iota_1\iota^*_1 v+  \kappa_1\kappa^*_1 v$ satisfy
\[
\Big(z\begin{pmatrix}1 & 0&0\\ 0 & 1& 0 \\ 0 & 0 & 1 \end{pmatrix}+
\begin{pmatrix} \iota_0^*\gamma \iota_0 & 0&\iota_0^*\gamma \kappa_0\\ 0 & 0& 0 \\ \kappa_0^*\gamma \iota_0 & 0 & \kappa_0^*\gamma \kappa_0 \end{pmatrix} + \begin{pmatrix}0 & -\iota_0^*C^*\iota_1 &0\\ \iota_1^*C \iota_0 & 0& 0 \\ 0 & 0 & 0 \end{pmatrix}\Big)\begin{pmatrix} \iota_0^*u \\ \iota_1^*v \\ \kappa_0^*u\end{pmatrix} = \begin{pmatrix} \iota_0^*f \\ \iota_1^*g \\ \kappa_0^*f\end{pmatrix}.
\]
\end{enumerate} 
\end{theorem}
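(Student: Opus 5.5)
The plan is to rewrite the equation $B_z(u,v)=(f,g)$ componentwise and project each component onto the orthogonal decompositions $H_0=\ran(C^*)\oplus\ker(C)$ and $H_1=\ran(C)\oplus\ker(C^*)$, which are available by \Cref{hyp:clr} and the subsequent remark. First record the elementary facts that $\iota_0\iota_0^*+\kappa_0\kappa_0^*=1_{H_0}$ and $\iota_1\iota_1^*+\kappa_1\kappa_1^*=1_{H_1}$, and that $\iota_j^*\iota_j=1$, $\kappa_j^*\kappa_j=1$, $\iota_j^*\kappa_j=0$, $\kappa_j^*\iota_j=0$. Since $v\in\ran(C)$ we have $\kappa_1^*v=0$, so $v=\iota_1\iota_1^*v$; this is why only three unknowns appear in (ii). Written out, (i) reads $zu+\gamma u+C^*v=f$ and $zv-Cu=g$.

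Next I collect the mapping properties of $C$ and $C^*$ relative to the decompositions. For $u\in\dom(C)$ we have $\kappa_0\kappa_0^*u\in\ker(C)\subseteq\dom(C)$, hence $\iota_0\iota_0^*u\in\dom(C)$ and $Cu=C\iota_0\iota_0^*u\in\ran(C)$, so $Cu=\iota_1\iota_1^*C\iota_0\iota_0^*u$. Likewise $C^*v=C^*\iota_1\iota_1^*v\in\ran(C^*)$, so $C^*v=\iota_0\iota_0^*C^*\iota_1\iota_1^*v$. Consequently $\kappa_0^*C^*v=0$ and $\kappa_1^*Cu=0$. Here the operators $\iota_1^*C\iota_0$ and $\iota_0^*C^*\iota_1$ are understood with their natural domains $\iota_0^*\dom(C)$ and $\iota_1^*\dom(C^*)$.

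For (i)$\Rightarrow$(ii), apply $\iota_0^*$ and $\kappa_0^*$ to the first equation and $\iota_1^*$ to the second. Inserting $u=\iota_0\iota_0^*u+\kappa_0\kappa_0^*u$ into $\gamma u$ produces the $\gamma$-block of (ii), and the facts above produce the $C$-blocks. The sign convention of the displayed off-diagonal matrix has to be reconciled with the sign convention of $B_z$. I expect this bookkeeping to be the only delicate point: the display in (ii) uses $-\iota_0^*C^*\iota_1$ and $+\iota_1^*C\iota_0$, the opposite of $B_z$. I would check whether this is a typo, or whether it is absorbed by replacing $v$ with $-v$ in the second component, and state the equivalence with the consistent sign.

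For (ii)$\Rightarrow$(i), reverse the argument. Apply $\iota_0$ to the first row and $\kappa_0$ to the third row and add; the identity $\iota_0\iota_0^*+\kappa_0\kappa_0^*=1$ recovers the first equation of (i). Apply $\iota_1$ to the second row; since $g\in\ran(C)$ and $v\in\ran(C)$, the components in $\ker(C^*)$ vanish on both sides, which recovers $zv-Cu=g$. Domain membership is preserved because $u$ and $v$ were assumed in $\dom(C)$ and $\dom(C^*)$ from the start. Hence the two conditions are equivalent.
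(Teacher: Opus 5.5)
Your proposal is correct and is essentially the paper's argument: read (i) line by line, use $v=\iota_1\iota_1^*v$, $Cu=C\iota_0\iota_0^*u\in\ran(C)$ and $C^*v\in\ran(C^*)\perp\ker(C)$, then project with $\iota_0^*$, $\kappa_0^*$, $\iota_1^*$. The paper resolves the sign discrepancy you spotted as a typo, silently working with $z+\diag(\gamma,0)+\begin{pmatrix}0&-C^*\\C&0\end{pmatrix}$ in place of $B_z$; substituting $v\mapsto -v$ alone would not reconcile the signs, since you would also need $g\mapsto -g$. Your explicit converse (ii)$\Rightarrow$(i) is correct and supplies the direction the paper leaves implicit.
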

\begin{proof}
Assume (i). Then with $F=(f,g)$ and $U=(u,v)$,
\[
 \big(z \begin{pmatrix} 1 & 0 \\ 0 & 1 \end{pmatrix}  + \begin{pmatrix}  \gamma & 0 \\ 0 & 0 \end{pmatrix}\\ + \begin{pmatrix} 0 & -C^* \\ C & 0 \end{pmatrix}\big)U=F.
\]
Then reading the equation line by line yields
\begin{equation}\label{eq:linebyline}
    z u + \gamma u -C^*v = f\text{ and } z v + Cu =g.
\end{equation}
Since $g\in \ran(C)$ and $v\in \ran(C)$, we deduce $v=\iota_1\iota_1^*v$ and $g=\iota_1\iota_1^*g$. Thus, we may write the second equation in \cref{eq:linebyline} as follows
\[
z\iota_1^*v  = -\iota_1^*C \iota_0 \iota_0^*u + \iota_1^*g.
\]Moreover, we write $u=\iota_0\iota^*_0 u+  \kappa_0\kappa^*_0 u$ and obtain
\begin{align*}
f & = 
z \iota_0\iota^*_0 u+ z\kappa_0\kappa^*_0 u + \gamma (\iota_0\iota^*_0 u+  \kappa_0\kappa^*_0 u) -C^*v \\
  & = 
  z \iota_0\iota^*_0 u+  z\kappa_0\kappa^*_0 u + (\iota_0\iota^*_0 + \kappa_0\kappa^*_0)\gamma (\iota_0\iota^*_0 u+  \kappa_0\kappa^*_0 u) -C^*\iota_1\iota_1^*v.
\end{align*}
Thus, we obtain
\[
  \iota_0^*f =  z \iota^*_0 u + \iota^*_0\gamma (\iota_0\iota^*_0 u+  \kappa_0\kappa^*_0 u) -\iota_0^*C^*\iota_1\iota_1^*v.
\]
and
\[
\kappa_0^*f =  z\kappa^*_0 u + \kappa^*_0\gamma (\iota_0\iota^*_0 u+  \kappa_0\kappa^*_0 u). 
\]Altogether, we obtain the following matrix form
\[
\Big(z \begin{pmatrix}1 & 0&0\\ 0 & 1& 0 \\ 0 & 0 & 1 \end{pmatrix}+
\begin{pmatrix} \iota_0^*\gamma \iota_0 & 0&\iota_0^*\gamma \kappa_0\\ 0 & 0& 0 \\ \kappa_0^*\gamma \iota_0 & 0 & \kappa_0^*\gamma \kappa_0 \end{pmatrix} + \begin{pmatrix}0 & -\iota_0^*C^*\iota_1 &0\\ \iota_1^*C \iota_0 & 0& 0 \\ 0 & 0 & 0 \end{pmatrix}\Big)\begin{pmatrix} \iota_0^*u \\ \iota_1^*v \\ \kappa_0^*u\end{pmatrix} = \begin{pmatrix} \iota_0^*f \\ \iota_1^*g \\ \kappa_0^*f\end{pmatrix}.\qedhere
\]
\end{proof}

The upshot of this reformulation is that modulo an additional equality we may assume that $C$ and $C^*$ are \textbf{both one-to-one and onto}. The block structure of the equation at hand allows to simplify the equation even further:

\begin{theorem}\label{thm:diag} Let $z\in \C_{\Re>-c}$. Assume \Cref{hyp:fne0p} and \Cref{hyp:clr} hold. Let $f\in H_0, g\in \ran(C)$, $z\in \C$, $(u,v)\in \dom(C)\times \dom(C^*)\cap \ran(C)$. Then the following conditions are equivalent:
\begin{enumerate}
\item[(i)] $B_z(u,v)=(f,g)$.
\item[(ii)] With $U \coloneqq T_2(z)^{-1}\begin{pmatrix} \iota_0^*u \\ \iota_1^*v \\ \kappa_0^*u\end{pmatrix} $ and $F\coloneqq T_1(z) \begin{pmatrix} \iota_0^*f \\ \iota_1^*g \\ \kappa_0^*f\end{pmatrix}$ we have
\begin{multline*}
\Big(z \begin{pmatrix}1 & 0\\ 0 & 1 \end{pmatrix}+
\begin{pmatrix} \iota_0^*\gamma \iota_0-\iota_0^*\gamma\kappa_0 (z+\kappa_0^*\gamma\kappa_0)^{-1} \kappa_0^*\gamma \iota_0 & 0\\ 0 & 0 \end{pmatrix} \\ +\begin{pmatrix}0 & -\iota_0^*C^*\iota_1\\ \iota_1^*C \iota_0 & 0 \end{pmatrix}\Big)\begin{pmatrix} U_1 \\ U_2 \end{pmatrix}=\begin{pmatrix} F_1 \\ F_2 \end{pmatrix}.
\end{multline*}
and 
\[
(z+\kappa_0^*\gamma\kappa_0)U_3 = F_3
\]
\begin{multline*}
  T_1(z)\coloneqq
\begin{pmatrix}1 & 0&-\iota_0^*\gamma\kappa_0 (z+\kappa_0^*\gamma\kappa_0)^{-1}\\ 0 & 1& 0 \\ 0 & 0 & 1 \end{pmatrix}   \text{ and }\\ T_2(z)\coloneqq  \begin{pmatrix}1 & 0&0 \\ 0 & 1& 0 \\ -(z+\kappa_0^*\gamma\kappa_0)^{-1}\kappa_0^*\gamma\iota_0  & 0 & 1 \end{pmatrix}.
\end{multline*}
\end{enumerate}
\end{theorem}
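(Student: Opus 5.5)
By \Cref{thm:soldecom}, condition (i) is equivalent to the $3\times 3$ block system stated there. So it suffices to show that this $3\times3$ system is equivalent to the pair of equations in (ii). The approach is a Schur complement factorisation: a block Gaussian elimination of the third variable $\kappa_0^*u$.

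The first step is invertibility of the lower right block. Write $\mathcal{M}(z)$ for the $3\times 3$ operator matrix from \Cref{thm:soldecom}. The compression $\kappa_0^*\gamma\kappa_0\in L(\ker(C))$ satisfies
\[
\Re\langle \kappa_0^*\gamma\kappa_0 x,x\rangle=\Re\langle\gamma\kappa_0x,\kappa_0x\rangle\geq c\|x\|^2
\]
by \Cref{hyp:fne0p}. Hence for $\Re z>-c$ we get $\Re(z+\kappa_0^*\gamma\kappa_0)\geq (\Re z+c)>0$. A bounded operator with strictly positive definite real part is boundedly invertible, by Lax--Milgram or the standard accretivity argument applied to it and its adjoint. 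Thus $(z+\kappa_0^*\gamma\kappa_0)^{-1}\in L(\ker(C))$ exists, and $T_1(z)$, $T_2(z)$ are well-defined bounded operators.

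The second step is the factorisation. Both $T_1(z)$ and $T_2(z)$ are block triangular with identities on the diagonal, so they are boundedly invertible. Their inverses are obtained by flipping the sign of the off-diagonal entry. Moreover, $T_2(z)$ maps $\ran(C^*)\cap\dom(C)\times \ran(C)\cap\dom(C^*)\times\ker(C)$ onto itself, since it only modifies the third component by a bounded term. Hence the domains match. I would then verify by direct multiplication that
\[
T_1(z)\,\mathcal{M}(z)\,T_2(z)=\begin{pmatrix} S(z) & 0\\ 0 & z+\kappa_0^*\gamma\kappa_0\end{pmatrix}.
\]
Here $S(z)$ is the $2\times2$ block operator appearing in (ii), with Schur complement $\iota_0^*\gamma\iota_0-\iota_0^*\gamma\kappa_0(z+\kappa_0^*\gamma\kappa_0)^{-1}\kappa_0^*\gamma\iota_0$ in the top left corner. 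The computation works as follows. Right multiplication by $T_2(z)$ replaces the first column by the first column minus the third column times $(z+\kappa_0^*\gamma\kappa_0)^{-1}\kappa_0^*\gamma\iota_0$. This kills the $(3,1)$ entry and produces the Schur complement in the $(1,1)$ entry. Left multiplication by $T_1(z)$ then kills the $(1,3)$ entry. The second row and column are untouched, because the $(2,3)$ and $(3,2)$ entries vanish and the unbounded entries $\pm\iota_1^*C\iota_0$, $\iota_0^*C^*\iota_1$ sit only in positions that the elimination does not combine with the third variable.

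The third step gives the conclusion. Write $X=(\iota_0^*u,\iota_1^*v,\kappa_0^*u)$ and $G=(\iota_0^*f,\iota_1^*g,\kappa_0^*f)$. Then $\mathcal{M}(z)X=G$ holds if and only if $T_1(z)\mathcal{M}(z)T_2(z)\,T_2(z)^{-1}X=T_1(z)G$, that is, if and only if the block diagonal system holds for $U=T_2(z)^{-1}X$ and $F=T_1(z)G$. This is precisely (ii), split into the $2\times2$ equation for $(U_1,U_2)$ and the scalar equation $(z+\kappa_0^*\gamma\kappa_0)U_3=F_3$.

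The main obstacle is not algebraic. It is the careful domain bookkeeping for the unbounded entries: one has to check that the identity $T_1\mathcal{M}T_2=\diag(S,z+\kappa_0^*\gamma\kappa_0)$ holds as an identity of operators including domains, and not merely formally. This follows because $T_1$ and $T_2$ only add bounded multiples involving the third, bounded, component.
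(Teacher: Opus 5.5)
Your proposal is correct and follows essentially the same route as the paper: you reduce to the $3\times 3$ system via \Cref{thm:soldecom}, use $\Re(z+\kappa_0^*\gamma\kappa_0)\geq \Re z+c>0$ to invert the lower right block, and block-diagonalise with $T_1(z)$ and $T_2(z)$ before decoupling. Your version is more explicit than the paper's: you check the factorisation $T_1(z)\mathcal{M}(z)T_2(z)=\diag\bigl(S(z),z+\kappa_0^*\gamma\kappa_0\bigr)$ including domains, and you keep the right-hand side $F_2=\iota_1^*g$ in agreement with the statement, whereas the paper's proof writes $0$ there.
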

\begin{proof}
With \Cref{thm:soldecom} we may, equivalently, rewrite (i) as \[
{z \begin{pmatrix}1 & 0&0\\ 0 & 1& 0 \\ 0 & 0 & 1 \end{pmatrix}+
\begin{pmatrix} \iota_0^*\gamma \iota_0 & 0&\iota_0^*\gamma \kappa_0\\ 0 & 0& 0 \\ \kappa_0^*\gamma \iota_0 & 0 & \kappa_0^*\gamma \kappa_0 \end{pmatrix} + \begin{pmatrix}0 & -\iota_0^*C^*\iota_1 &0\\ \iota_1^*C \iota_0 & 0& 0 \\ 0 & 0 & 0 \end{pmatrix}\Big)}\begin{pmatrix} \iota_0^*u \\ \iota_1^*v \\ \kappa_0^*u\end{pmatrix} = \begin{pmatrix} \iota_0^*f \\ \iota_1^*g \\ \kappa_0^*f\end{pmatrix}.
\]
As $\Re\gamma\geq c$ and $\Re z>-c$ we get that $z+\kappa_0^*\gamma\kappa_0$ is continuously invertible (note that, trivially, $\Re \kappa_0^*\gamma\kappa_0\geq c\kappa_0^*\kappa_0$). Next, we introduce $U \coloneqq T_2(z)^{-1}\begin{pmatrix} \iota_0^*u \\ \iota_1^*v \\ \kappa_0^*u\end{pmatrix} $ and $F\coloneqq T_1(z) \begin{pmatrix} \iota_0^*f \\ \iota_1^*g \\ \kappa_0^*f\end{pmatrix}$ and obtain, equivalently,
\begin{multline*}
\Big(z \begin{pmatrix}1 & 0&0\\ 0 & 1& 0 \\ 0 & 0 & 1 \end{pmatrix}+
\begin{pmatrix} \iota_0^*\gamma \iota_0-\iota_0^*\gamma\kappa_0 (z+\kappa_0^*\gamma\kappa_0)^{-1} \kappa_0^*\gamma \iota_0 & 0&0\\ 0 & 0& 0 \\ 0 & 0 & \kappa_0^*\gamma \kappa_0 \end{pmatrix} \\ +\begin{pmatrix}0 & -\iota_0^*C^*\iota_1 &0\\ \iota_1^*C \iota_0 & 0& 0 \\ 0 & 0 & 0 \end{pmatrix}\Big)U=F.
\end{multline*}
This system of equation decouples into
\begin{multline*}
\Big(z \begin{pmatrix}1 & 0\\ 0 & 1 \end{pmatrix}+
\begin{pmatrix} \iota_0^*\gamma \iota_0-\iota_0^*\gamma\kappa_0 (z+\kappa_0^*\gamma\kappa_0)^{-1} \kappa_0^*\gamma \iota_0 & 0\\ 0 & 0 \end{pmatrix} \\ +\begin{pmatrix}0 & -\iota_0^*C^*\iota_1\\ \iota_1^*C \iota_0 & 0 \end{pmatrix}\Big)\begin{pmatrix} U_1 \\ U_2 \end{pmatrix}=\begin{pmatrix} F_1 \\ 0 \end{pmatrix}.
\end{multline*}
and 
\[
(z+\kappa_0^*\gamma\kappa_0)U_3 = F_3.\qedhere
\]
\end{proof}
\begin{remark}\label{rem:rezc}
\begin{enumerate}
\item[(a)] We have already argued that $(z+\kappa_0^*\gamma\kappa_0)$ is continuously invertible. It is easy to see that $\| (z+\kappa_0^*\gamma\kappa_0)^{-1}\|\leq \frac1{\Re z + c}$.
\item[(b)]
In order estimate $\|(B_z)^{-1}\|$, it thus suffices to estimate 
\[
\Big\|\Big(z \begin{pmatrix}1 & 0\\ 0 & 1 \end{pmatrix}+
\begin{pmatrix} \iota_0^*\gamma \iota_0-\iota_0^*\gamma\kappa_0 (z+\kappa_0^*\gamma\kappa_0)^{-1} \kappa_0^*\gamma \iota_0 & 0\\ 0 & 0 \end{pmatrix} \\ +\begin{pmatrix}0 & -\iota_0^*C^*\iota_1\\ \iota_1^*C \iota_0 & 0 \end{pmatrix}\Big)^{-1}\Big\|
\]
and
\[
\|(z+\kappa_0^*\gamma\kappa_0)^{-1}\|.
\]
Indeed, this follows from \Cref{thm:diag} and the fact that
\[
    \|T_1(z)\|,     \|T_1(z)^{-1}\|,     \|T_2(z)\|,     \|T_2(z)^{-1}\| \leq 1+\frac1{\Re z + c}\|\gamma\|.
\]
So, as long as $\Re z$ is uniformly bounded away from $-c$, the statement on the estimation of $\|(B_z)^{-1}\|$ follows.\qedhere
\end{enumerate}
 \end{remark}
By the previous remark, it suffices to treat the first operator in the reformulation (ii) of the previous theorem. For this, the following elementary observation from \cite{DIW24} will be useful:
\begin{lemma}[{{\cite[Lemma 3.9]{DIW24}}}]\label{lem:pddtw} Let $H=H_0\times H_1$ and $\eta\in L(H)$ be such that $\Re \eta\geq c$ for some $c>0$. Then $\Re \eta_{11}\geq c$ and
\[
  \Re  (\eta_{00}-\eta_{01}\eta_{11}^{-1}\eta_{10})\geq c.
\]
\end{lemma}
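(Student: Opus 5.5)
We prove the two inequalities by evaluating the quadratic form of $\Re\eta$ on suitably chosen vectors. Throughout, $\eta$ is written in block form $\eta=\begin{pmatrix}\eta_{00}&\eta_{01}\\ \eta_{10}&\eta_{11}\end{pmatrix}$ with $\eta_{jk}\in L(H_k,H_j)$, and $\Re\eta\geq c$ means $\Re\scprod{\eta x}{x}\geq c\|x\|^2$ for all $x\in H$.

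For the first claim we test $\Re\eta$ on vectors of the form $(0,y)$ with $y\in H_1$. This gives
\[
\Re\scprod{\eta_{11}y}{y}=\Re\scprod{\eta(0,y)}{(0,y)}\geq c\|y\|^2 .
\]
Hence $\Re\eta_{11}\geq c$. By the Lax--Milgram lemma (equivalently, an operator that is strictly accretive and bounded is continuously invertible), $\eta_{11}$ is boundedly invertible, so the Schur complement $S\coloneqq \eta_{00}-\eta_{01}\eta_{11}^{-1}\eta_{10}$ is well defined in $L(H_0)$.

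For the second claim, fix $x\in H_0$ and put $y\coloneqq -\eta_{11}^{-1}\eta_{10}x$, so that $\eta_{10}x+\eta_{11}y=0$. With this choice the second component of $\eta(x,y)$ vanishes. The first component is $\eta_{00}x+\eta_{01}y=Sx$. Therefore
\[
\Re\scprod{Sx}{x}=\Re\scprod{\eta(x,y)}{(x,y)}\geq c\big(\|x\|^2+\|y\|^2\big)\geq c\|x\|^2 .
\]
This yields $\Re S\geq c$.

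There is essentially no obstacle here. The only points that need care are that $\eta_{11}^{-1}$ exists, which the first step settles, and that we choose the test vector $y$ so that the second block row of $\eta(x,y)$ vanishes. It is this choice that kills the cross terms in the quadratic form.
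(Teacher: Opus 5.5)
Your proof is correct and complete. Testing on $(0,y)$ gives $\Re\eta_{11}\geq c$ and hence bounded invertibility of $\eta_{11}$. Choosing $y=-\eta_{11}^{-1}\eta_{10}x$ makes $\eta(x,y)=(Sx,0)$, so $\Re\langle Sx,x\rangle=\Re\langle \eta(x,y),(x,y)\rangle\geq c\|x\|^2$. The paper does not prove this lemma itself but imports it from \cite[Lemma 3.9]{DIW24}, so your test-vector argument supplies a self-contained proof of exactly what is used later.
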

As a consequence, we obtain the following result, which is a mere reformulation of \Cref{lem:pddtw}.
\begin{lemma}\label{lem:topleft} Let $\gamma\in L(H_0)$ with $\Re \gamma\geq c>0$, $z\in \C$ such that $\Re z>-c$. Then
\[
   \Re \iota_0^*\gamma \iota_0-\iota_0^*\gamma\kappa_0 (z+\kappa_0^*\gamma\kappa_0)^{-1} \kappa_0^*\gamma \iota_0 \geq \min\{\Re z + c, c\}.
\]
\end{lemma}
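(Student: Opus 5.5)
The plan is to realise the operator in question as a Schur complement of a suitable $2\times 2$ block operator on $\ran(C^*)\times\ker(C)$, and then to apply \Cref{lem:pddtw} directly. Set $c'\coloneqq\min\{\Re z+c,c\}$; it is positive since $\Re z>-c$. Define $\eta\in L(\ran(C^*)\times\ker(C))$ by
\[
  \eta\coloneqq\begin{pmatrix} \iota_0^*\gamma\iota_0 & \iota_0^*\gamma\kappa_0\\ \kappa_0^*\gamma\iota_0 & z+\kappa_0^*\gamma\kappa_0\end{pmatrix}.
\]
Then $\eta_{11}=z+\kappa_0^*\gamma\kappa_0$. Moreover, $\eta_{00}-\eta_{01}\eta_{11}^{-1}\eta_{10}$ is exactly the operator $\iota_0^*\gamma \iota_0-\iota_0^*\gamma\kappa_0 (z+\kappa_0^*\gamma\kappa_0)^{-1} \kappa_0^*\gamma \iota_0$ of the statement. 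So once $\Re\eta\geq c'$ is shown, \Cref{lem:pddtw} (applied with $c'$ in place of $c$) gives the claim. We read the statement as a bound on the real part of the whole Schur complement.

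To verify $\Re\eta\geq c'$, I use \Cref{hyp:clr} through the Helmholtz decomposition $H_0=\ran(C^*)\oplus\ker(C)$. This makes $J\colon(x,y)\mapsto \iota_0x+\kappa_0y$ a unitary from $\ran(C^*)\times\ker(C)$ onto $H_0$. It satisfies
\[
  J^*\gamma J=\begin{pmatrix} \iota_0^*\gamma\iota_0 & \iota_0^*\gamma\kappa_0\\ \kappa_0^*\gamma\iota_0 & \kappa_0^*\gamma\kappa_0\end{pmatrix},
\]
so that $\eta=J^*\gamma J+\diag(0,z)$. Because $J$ is unitary, $\Re J^*\gamma J = J^*(\Re\gamma)J\geq c$. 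The real part of $\diag(0,z)$ is $\diag(0,\Re z)$.

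I then split into two cases on the sign of $\Re z$. If $\Re z\geq0$, then $\diag(0,\Re z)\geq0$, so $\Re\eta\geq c=c'$. If $\Re z<0$, then $\diag(0,\Re z)\geq \Re z\cdot 1$, so $\Re\eta\geq c+\Re z=c'$. In both cases $\Re\eta\geq c'>0$, and \Cref{lem:pddtw} yields the assertion; it also recovers the invertibility of $\eta_{11}$ already noted in the proof of \Cref{thm:diag}.

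There is no serious obstacle here. The only points needing care are that $J$ is genuinely unitary, which requires the orthogonal decomposition from closedness of the ranges, and that the real part of the scalar $z$ enters only in the $\ker(C)$-component. This is why the lower bound is a minimum of the two constants rather than $\Re z+c$ itself.
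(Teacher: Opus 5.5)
Your proposal is correct and follows the paper's proof: the paper also applies \Cref{lem:pddtw} to the same block operator $\eta$ on $\ran(C^*)\times\ker(C)$. You additionally verify $\Re\eta\geq\min\{\Re z+c,c\}$, using the unitary $J$ from the Helmholtz decomposition and a case split on the sign of $\Re z$; the paper leaves this step implicit.
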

\begin{proof}
The claim follows by applying \Cref{lem:pddtw} to $\eta = \begin{pmatrix}\iota_0^*\gamma \iota_0 & \iota_0^*\gamma\kappa_0 \\ \kappa_0^*\gamma \iota_0 & z+ \kappa_0^*\gamma\kappa_0\end{pmatrix}$.
\end{proof}
In the next section, we are concerned with the first equation in the decomposition provided in \Cref{thm:diag}.

\section{The special case of one-to-one and onto $C$}
\label{sec:fulldamp_abs}

Let $\gamma\in L(H_0)$ with $\Re \gamma\geq c$. Throughout this section, we let $C\colon \dom(C)\subseteq H_0\to H_1$ be densely defined, closed, one-to-one and onto. 

Let us consider 
\[
B \coloneqq  -  \begin{pmatrix} \gamma & 0 \\ 0 & 0 \end{pmatrix} -   \begin{pmatrix} 0 & -C^* \\ C & 0 \end{pmatrix}.
\]
\begin{theorem}\label{thm:resB} There exists $\delta>0$ such that
\[
   \C_{\Re>-\delta}\subseteq \rho(B)\text{ and } \sup_{z\in \C_{\Re>-\delta}}\|(z-B)^{-1}\|<\infty.
\]
\end{theorem}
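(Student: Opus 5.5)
The plan is to prove a uniform \emph{a priori} estimate for the resolvent equation on a strip $\C_{\Re>-\delta}$, and then combine it with the known resolvent set of the m-dissipative operator $B$ to extend $\rho(B)$ into the strip.

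Since $C$ is closed, one-to-one and onto, the closed graph theorem gives $C^{-1}\in L(H_1,H_0)$; put $K\coloneqq\|C^{-1}\|$.

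Fix $z$ with $\Re z>-\delta$, where $\delta\in(0,c/2)$ will be chosen later. Let $(u,v)\in\dom(C)\times\dom(C^*)$ and $(f,g)\coloneqq(z-B)(u,v)$, that is,
\[
 (z+\gamma)u - C^*v = f,\qquad zv+Cu=g .
\]

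\emph{Energy estimate.} Taking the real part of the inner product of this system with $(u,v)$ and using skew-selfadjointness of the off-diagonal part (\Cref{prop:genthm}) and $\Re\gamma\geq c$, we get
\[
 (\Re z+c)\|u\|^2+\Re z\,\|v\|^2\leq \|(f,g)\|\,\|(u,v)\|.
\]
For $\Re z\geq -\delta$ this yields
\[
 \tfrac c2\|u\|^2\leq \|(f,g)\|\|(u,v)\|+\delta\|v\|^2 .
\]
This controls $u$ but not $v$, which is the main obstacle. The change of variables, exploiting that $C$ is onto, is the key step for recovering $v$.

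\emph{Control of $v$ via the multiplier $\varphi\coloneqq C^{-1}v$.} Write $v=C\varphi$ with $\|\varphi\|\leq K\|v\|$, and pair the first equation with $\varphi$. Since $\scprod{C^*v}{\varphi}=\scprod{v}{C\varphi}=\|v\|^2$, we obtain
\[
 \|v\|^2 = z\scprod{u}{\varphi}+\scprod{\gamma u}{\varphi}-\scprod{f}{\varphi}.
\]
The term $z\scprod{u}{\varphi}$ is dangerous for large $|z|$. Its modulus equals $|\scprod{u}{z\varphi}|$, and the second equation gives $z\varphi=C^{-1}(zv)=C^{-1}g-u$. Hence
\[
 \|v\|^2\leq \|u\|\big(K\|g\|+\|u\|\big)+\|\gamma\|K\|u\|\|v\|+K\|f\|\|v\| .
\]
After Young's inequality this becomes $\|v\|^2\leq M_0(\|u\|^2+\|(f,g)\|^2)$, with $M_0$ depending only on $K$ and $\|\gamma\|$, and in particular independent of $z$.

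\emph{Combining.} Insert the bound on $\|v\|^2$ into the energy estimate and use Young once more. This gives
\[
 \|(u,v)\|^2\leq M_1\big(\|(f,g)\|\|(u,v)\|+\|(f,g)\|^2\big)+\delta M_1\|(u,v)\|^2,
\]
again with $M_1$ independent of $z$. Choosing $\delta$ so small that $\delta M_1\leq\frac12$ yields $\|(u,v)\|\leq M\|(z-B)(u,v)\|$ uniformly for $\Re z>-\delta$.

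\emph{Existence.} For $\Re z>0$ we already have $z\in\rho(B)$ by \Cref{prop:genthm}. If $z_0\in\rho(B)$ lies in the strip, the a priori bound gives $\|(z_0-B)^{-1}\|\leq M$, so a Neumann series shows that the disc of radius $1/M$ around $z_0$ lies in $\rho(B)$. Covering the strip $\C_{\Re>-\delta}$ by finitely many steps of size less than $1/M$, starting from $\C_{\Re>0}$, shows $\C_{\Re>-\delta}\subseteq\rho(B)$ with $\sup_{z\in\C_{\Re>-\delta}}\|(z-B)^{-1}\|\leq M$. This proves \Cref{thm:resB}.
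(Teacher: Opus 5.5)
Your proof is correct, but it takes a different route from the paper's. The paper does not estimate $z-B$ directly. In \Cref{lem:zne0} it passes to $u_\delta=(1+\frac{\delta}{z})u$ and multiplies from the left by the $z$-dependent isomorphism $\begin{pmatrix}1&(\gamma-\delta)\frac{\delta}{z}C^{-1}\\ 0&1+\frac{\delta}{z}\end{pmatrix}$. This gives an operator of the form $z+(\text{bounded})+(\text{skew-selfadjoint})$ whose real part is at least $\Re z+\min\{\tilde c,\delta/2\}$ by \Cref{lem:realpartest}. Accretivity then yields invertibility and the norm bound at once, so no separate existence argument is needed. The cost is the division by $z$: the transformation degenerates as $z\to0$, so the paper treats a neighbourhood of $0$ separately, using the explicit inverse from \Cref{lem:block} and openness of $\rho(B)$.

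The underlying mechanism is the same as yours. On solutions, the second equation gives $u_\delta=u-\delta C^{-1}v+\frac{\delta}{z}C^{-1}g$. So, up to a data term, the paper's new unknown is $u-\delta C^{-1}v$, i.e.\ it builds your multiplier $C^{-1}v$ into the operator. Your a priori version uses the identity $zC^{-1}v=C^{-1}g-u$ and never divides by $z$. It therefore covers the whole strip, including $z=0$, with one constant. The price is that you must prove existence separately, and your continuation argument from $\C_{\Re>0}$ does this correctly.

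Conversely, the paper's pointwise-in-$z$ invertibility carries over directly to $z$-dependent damping as in \Cref{thm:expstabS}. Your continuation step uses that $B$ is a fixed operator; in the $z$-dependent case one would instead apply the same estimate to the adjoint to obtain surjectivity.
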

The idea of proof is a change of variable ansatz, which, in a more general situation is treated in \cite{Tro15}, see also \cite{STW22}.

\begin{lemma}\label{lem:zne0} Let $z\in \C\setminus \{0\}$, $\delta>0$, $\delta\neq- z$, and $U\coloneqq (u,v)\in \dom(C)\times \dom(C^*)$. Then for $F\coloneqq (f,g)\in H_0\times H_1$ the following conditions are equivalent:
\begin{enumerate}
  \item[(i)] $(z-B)U=F$
  \item[(ii)] With $U_\delta = (u_\delta,v)$ where $u_\delta = u+ \frac{\delta}{z}u$ and $F_\delta = (f+(\gamma-\delta)\frac{\delta}{z}C^{-1}g, (1+\frac{\delta}{z})g)$ we have \[
  \Big(z \begin{pmatrix} 1 & 0 \\ 0 & 1 \end{pmatrix} + \begin{pmatrix} (\gamma-\delta) & (\gamma-\delta){\delta}C^{-1}   \\ 0 & \delta \end{pmatrix} +  \begin{pmatrix} 0 & -C^* \\ C & 0 \end{pmatrix}\Big) U_\delta= F_\delta.
  \]
\end{enumerate}
\end{lemma}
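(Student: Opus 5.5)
The plan is a direct computation: expand both systems row by row and use the second (constraint) row to eliminate $C^{-1}$ in the first row. First I note that $C$ is closed, one-to-one and onto, so $C^{-1}\colon H_1\to H_0$ is everywhere defined and closed, hence bounded by the closed graph theorem, with $\ran(C^{-1})=\dom(C)$. Next, $1+\frac{\delta}{z}\neq 0$ because $\delta\neq -z$. Hence $u_\delta=(1+\frac{\delta}{z})u\in\dom(C)$ if and only if $u\in\dom(C)$, so $U_\delta\in\dom(C)\times\dom(C^*)$ and both systems make sense for the same $U$. Written out, (i) reads
\[
  zu+\gamma u-C^*v=f,\qquad zv+Cu=g .
\]

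For the second row of (ii), I compute
\[
  zv+\delta v+Cu_\delta=zv+\delta v+\Big(1+\tfrac{\delta}{z}\Big)Cu .
\]
Since $zv+\delta v=(1+\frac{\delta}{z})zv$, this equals $(1+\frac{\delta}{z})(zv+Cu)$. So the second row of (ii) is $(1+\frac{\delta}{z})(zv+Cu)=(1+\frac{\delta}{z})g$. Because $1+\frac{\delta}{z}\neq0$, it is equivalent to the second row of (i).

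For the first row, expanding $(z+\gamma-\delta)(1+\frac{\delta}{z})u$ gives $zu+\gamma u+\frac{\delta}{z}(\gamma-\delta)u$, since the terms $-\delta u$ and $+\delta u$ cancel. Hence the left-hand side of the first row of (ii) equals
\[
  zu+\gamma u-C^*v+(\gamma-\delta)\Big(\tfrac{\delta}{z}u+\delta C^{-1}v\Big).
\]
Now I use the (already equivalent) second row. Applying $C^{-1}$ to $Cu=g-zv$ gives $u=C^{-1}g-zC^{-1}v$, hence
\[
  \tfrac{\delta}{z}u+\delta C^{-1}v=\tfrac{\delta}{z}C^{-1}g .
\]
Therefore, given the second row, the first row of (ii), namely LHS $=f+(\gamma-\delta)\frac{\delta}{z}C^{-1}g$, holds if and only if $zu+\gamma u-C^*v=f$. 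This is the first row of (i).

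Both implications follow from the same chain of identities. Going from (i) to (ii) is the computation read forward. For (ii) to (i), I first recover $zv+Cu=g$ from the second row by dividing by $1+\frac{\delta}{z}$, and then cancel the common term $(\gamma-\delta)\frac{\delta}{z}C^{-1}g$ in the first row.

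There is no real obstacle here. The only points needing care are the ordering and a few checks:
\begin{itemize}
\item the second row must be established first, since the reduction of the first row uses $Cu=g-zv$;
\item $C^{-1}$ is bounded and $C^{-1}Cu=u$ on $\dom(C)$;
\item division by $z$ and by $1+\frac{\delta}{z}$ is legitimate, which is exactly where the hypotheses $z\neq0$ and $\delta\neq -z$ enter.
\end{itemize}
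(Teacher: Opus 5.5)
Your proof is correct and is essentially the paper's argument: the paper left-multiplies the system in (i) by the topological isomorphism $\begin{pmatrix} 1 & (\gamma-\delta)\frac{\delta}{z}C^{-1} \\ 0 & 1+\frac{\delta}{z}\end{pmatrix}$, which is exactly your row operation of scaling the second row by $1+\frac{\delta}{z}$ and adding $(\gamma-\delta)\frac{\delta}{z}C^{-1}$ times it to the first. Your explicit handling of the domains, and of where $z\neq 0$ and $\delta\neq -z$ are used, is a welcome extra.
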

\begin{proof}
Note that (i) holds if and only if
\[
    \Big(z \begin{pmatrix} 1 & 0 \\ 0 & 1 \end{pmatrix} + \begin{pmatrix} \gamma & 0 \\ 0 & 0 \end{pmatrix} +  \begin{pmatrix} 0 & -C^* \\ C & 0 \end{pmatrix}\Big) U= F
\]
Multiplying on the left with the topological isomorphism 
\[
   \begin{pmatrix} 1 &  (\gamma-\delta)\frac{\delta}{z}C^{-1} \\ 0 & 1-\frac{\delta}{z}\end{pmatrix}
\]we get, equivalently,
\begin{align*}
F_\delta & = \begin{pmatrix} 1 &  (\gamma-\delta)\frac{\delta}{z}C^{-1} \\ 0 & 1+\frac{\delta}{z}\end{pmatrix} F \\
   &   =    \begin{pmatrix} 1 &  (\gamma-\delta)\frac{\delta}{z}C^{-1} \\ 0 & 1+\frac{\delta}{z}\end{pmatrix}  \Big(z \begin{pmatrix} 1 & 0 \\ 0 & 1 \end{pmatrix} + \begin{pmatrix} \gamma & 0 \\ 0 & 0 \end{pmatrix} +  \begin{pmatrix} 0 & -C^* \\ C & 0 \end{pmatrix}\Big) U \\
   & =    \Big(z \begin{pmatrix} 1 & 0 \\ 0 & 1 \end{pmatrix} + \begin{pmatrix} \gamma+(\gamma-\delta)\frac{\delta}{z} & (\gamma-\delta){\delta}C^{-1}   \\ 0 & \delta \end{pmatrix} +  \begin{pmatrix} 0 & -C^* \\ C(1+\frac{\delta}{z}) & 0 \end{pmatrix}\Big) U\\
   & =  \Big(z \begin{pmatrix} (1+\frac{\delta}{z}) & 0 \\ 0 & 1 \end{pmatrix} + \begin{pmatrix} (\gamma-\delta)(1+\frac{\delta}{z}) & (\gamma-\delta){\delta}C^{-1}   \\ 0 & \delta \end{pmatrix} +  \begin{pmatrix} 0 & -C^* \\ C(1+\frac{\delta}{z}) & 0 \end{pmatrix}\Big) U \\
   & =  \Big(z \begin{pmatrix} 1 & 0 \\ 0 & 1 \end{pmatrix} + \begin{pmatrix} (\gamma-\delta) & (\gamma-\delta){\delta}C^{-1}   \\ 0 & \delta \end{pmatrix} +  \begin{pmatrix} 0 & -C^* \\ C & 0 \end{pmatrix}\Big) U_\delta.\qedhere
\end{align*}
\end{proof}
\begin{lemma}\label{lem:realpartest} Let $z\in \C\setminus \{0\}$, $\delta>0$. Then
\begin{multline*}
   \Re  \Big(z \begin{pmatrix} 1 & 0 \\ 0 & 1 \end{pmatrix} + \begin{pmatrix} (\gamma-\delta) & (\gamma-\delta){\delta}C^{-1} \\ 0 & \delta\end{pmatrix}\Big)\\ \geq \min\{ \Re z +  (c-\delta(1 + \frac{1}{2}((\|\gamma\|+\delta)\|C^{-1}\|)^2),\Re z+ \frac{\delta}{2}\}.
\end{multline*}
\end{lemma}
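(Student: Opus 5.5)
Since the statement concerns only the real part of a bounded block operator on $H_0\times H_1$ (the unbounded part $\begin{pmatrix}0&-C^*\\ C&0\end{pmatrix}$ is not involved), I will compute the quadratic form $\Re\langle M(u,v),(u,v)\rangle$ directly for $M \coloneqq z + \begin{pmatrix} \gamma-\delta & (\gamma-\delta)\delta C^{-1}\\ 0&\delta\end{pmatrix}$. Note that $C^{-1}\colon H_1\to H_0$ is bounded: $C$ is closed, one-to-one and onto, so the closed graph theorem applies. Hence $M\in L(H_0\times H_1)$. For $(u,v)\in H_0\times H_1$, expanding gives
\[
\Re\langle M(u,v),(u,v)\rangle = \Re z\,(\|u\|^2+\|v\|^2) + \Re\langle(\gamma-\delta)u,u\rangle + \delta\|v\|^2 + \delta\Re\langle(\gamma-\delta)C^{-1}v,u\rangle.
\]
The diagonal terms are controlled by $\Re\gamma\geq c$, so $\Re\langle(\gamma-\delta)u,u\rangle\geq (c-\delta)\|u\|^2$. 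The off-diagonal term is bounded from below by $-\delta K\|v\|\|u\|$, where $K\coloneqq(\|\gamma\|+\delta)\|C^{-1}\|$, using $\|\gamma-\delta\|\leq \|\gamma\|+\delta$.

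The key step, and the only point requiring care, is to split the cross term with a weighted Young inequality so that the $v$-part is absorbed by half of the good term $\delta\|v\|^2$ and the $u$-part produces exactly the constant in the statement. Concretely, I will use $\delta K\|v\|\|u\| \leq \frac{\delta}{2}\|v\|^2 + \frac{\delta}{2}K^2\|u\|^2$. This yields
\[
\Re\langle M(u,v),(u,v)\rangle \geq \big(\Re z + c-\delta - \tfrac{\delta}{2}K^2\big)\|u\|^2 + \big(\Re z+\tfrac{\delta}{2}\big)\|v\|^2,
\]
and the coefficient of $\|u\|^2$ is precisely $\Re z + c-\delta(1+\frac12 K^2)$.

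Finally, bounding each coefficient below by the minimum of the two gives
\[
\Re\langle M(u,v),(u,v)\rangle\geq \min\{\Re z + c-\delta(1+\tfrac12K^2),\ \Re z+\tfrac{\delta}{2}\}\,\|(u,v)\|^2,
\]
which is the asserted inequality in the sense of positive definiteness. The hypothesis $z\neq 0$ plays no role in this estimate; it is needed only for the change of variables in \Cref{lem:zne0}. I expect no genuine obstacle here. The only thing to watch is choosing the Young weights so that the constants match the stated $\frac12$ factors.
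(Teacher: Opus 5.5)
Your proposal is correct and is essentially the paper's proof. The paper likewise expands the quadratic form and uses $\Re\gamma\geq c$. It bounds the cross term by Young's inequality with weight $\varepsilon=p\delta$ and then sets $p=1$, which is exactly your split $\delta K\|u\|\|v\|\leq \frac{\delta}{2}\|v\|^2+\frac{\delta}{2}K^2\|u\|^2$.
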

\begin{proof}
 Let $(u,v) \in H_0\times H_1$. Then
 \[
  \Re\langle z \begin{pmatrix} u \\ v\end{pmatrix}, \begin{pmatrix} u \\ v\end{pmatrix}\rangle = \Re z (\|u\|^2 + \|v\\^2).
 \]
 Moreover, using $\Re \gamma\geq c$ and Young's inequality for $\varepsilon=p\delta>0$ for some $p>0$, we get
 \begin{align*}
 &\Re  \langle \begin{pmatrix} (\gamma-\delta) & (\gamma-\delta){\delta}C^{-1} \\ 0 & \delta \end{pmatrix}\begin{pmatrix} u \\ v\end{pmatrix}, \begin{pmatrix} u \\ v\end{pmatrix}\rangle   \\ &\geq (c-\delta)\|u\|^2 - (\|\gamma\|+\delta)\delta\|C^{-1}\| \|u\|\|v\| + \delta\|v\|^2 \\
& \geq (c-\delta - \frac{1}{2\varepsilon}((\|\gamma\|+\delta)\delta\|C^{-1}\|)^2)\|u\|^2 + (\delta-\frac{1}{2}\varepsilon)\|v\|^2 \\
& = (c-\delta(1 + \frac{1}{2p}((\|\gamma\|+\delta)\|C^{-1}\|)^2)\|u\|^2 + \delta(1-\frac{1}{2}p)\|v\|^2.
 \end{align*}
 Thus, the assertion follows by setting $p=1$.
\end{proof}
\begin{remark}
From the proof we see that there is room for optimising the lower bound, by optimising in $p$.
\end{remark}

For the case of $z=0$, we require a different strategy than for $z\neq 0$:

\begin{lemma}\label{lem:block} Let $H_0,H_1$ be Hilbert spaces.

(a) Let $A\in L(H_0)$ and let $B\colon \dom(B)\subseteq H_1 \to H_0$ as well as $C\colon \dom(C)\subseteq H_0\to H_1$ be both continuously invertible. Then $ \begin{pmatrix}
    A & B \\ C & 0
   \end{pmatrix} $
on its natural domain $\dom(C)\times \dom(B)$ as an operator in $H_0\times H_1$ is continuously invertible and we have
\[
   \begin{pmatrix}
    A & B \\ C & 0
   \end{pmatrix} ^{-1} = 
   \begin{pmatrix}
    0 & C^{-1} \\ B^{-1} & -B^{-1}AC^{-1}
   \end{pmatrix} 
\] 
\end{lemma}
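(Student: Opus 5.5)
The plan is to verify the claimed inverse formula directly and then argue boundedness. Write $M\coloneqq \begin{pmatrix} A & B \\ C & 0\end{pmatrix}$ with $\dom(M)=\dom(C)\times\dom(B)$, and let
\[
   N\coloneqq \begin{pmatrix} 0 & C^{-1} \\ B^{-1} & -B^{-1}AC^{-1}\end{pmatrix}.
\]
Since $A\in L(H_0)$ and $B^{-1}\in L(H_0,H_1)$, $C^{-1}\in L(H_1,H_0)$ by continuous invertibility, every entry of $N$ is a bounded operator. Hence $N\in L(H_0\times H_1)$, with $\|N\|$ controlled by $\|C^{-1}\|$, $\|B^{-1}\|$ and $\|B^{-1}\|\|A\|\|C^{-1}\|$. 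It then remains to show that $N$ is a two-sided inverse of $M$, respecting domains.

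\textbf{Right inverse.} Let $(f,g)\in H_0\times H_1$ and put $(u,v)\coloneqq N(f,g)$, so that $u=C^{-1}g$ and $v=B^{-1}f-B^{-1}AC^{-1}g = B^{-1}(f-AC^{-1}g)$. Then $u\in \ran(C^{-1})=\dom(C)$ and $v\in\ran(B^{-1})=\dom(B)$, so $(u,v)\in\dom(M)$. Computing, we get $Cu=g$ and
\[
   Au+Bv = AC^{-1}g + f - AC^{-1}g = f.
\]
Thus $MN=1$ on $H_0\times H_1$, and in particular $M$ is onto.

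\textbf{Left inverse.} Let $(u,v)\in\dom(C)\times\dom(B)$. The first component of $NM(u,v)$ is $C^{-1}Cu=u$. The second component is
\[
   B^{-1}(Au+Bv) - B^{-1}AC^{-1}Cu = B^{-1}Au + v - B^{-1}Au = v.
\]
So $NM=1$ on $\dom(M)$, and $M$ is one-to-one.

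Together, $M$ is bijective from $\dom(M)$ onto $H_0\times H_1$ with $M^{-1}=N$ bounded. This is continuous invertibility, and $M$ is automatically closed because its inverse is bounded. There is no real obstacle here; the only point requiring care is the domain bookkeeping, namely that $N$ maps into the natural domain $\dom(C)\times\dom(B)$, which the right-inverse step handles.
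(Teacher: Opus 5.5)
Your proposal is correct and takes the same approach as the paper, which simply says the formula is "immediately confirmed by direct computation." You carry out that computation explicitly: you verify $MN=1$ on $H_0\times H_1$ and $NM=1$ on $\dom(C)\times\dom(B)$, and you check that $N$ maps into the natural domain.
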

\begin{proof}
The formula is immediately confirmed by direct computation.
\end{proof}

\begin{proof}[Proof of \Cref{thm:resB}] First of all, note that $B$ is invertible by \Cref{lem:block}. Next, as the resolvent set is open, we find $\varepsilon>0$ such that $B(0,\varepsilon)\subseteq \rho(B)$. Next, let $0<\delta<\varepsilon$ and, by possibly further decreasing $\delta$ we may assume that $ c-\delta(1 + \frac{1}{2}((\|\gamma\|+\delta)\|C^{-1}\|)^2)\coloneqq \tilde{c}>0$. Define $d\coloneqq \frac{1}{2}\min\{\tilde{c},\delta/2\}$. For $z\in \C_{\Re>-d}$, we let $(u,v)\in \dom(B)$ and $F\in H_0\times H_1$ be such that 
\[
   (z-B)U=F.
\]
Then, by \Cref{lem:zne0}, we have
\[
  \Big(z \begin{pmatrix} 1 & 0 \\ 0 & 1 \end{pmatrix} + \begin{pmatrix} (\gamma-\delta) & (\gamma-\delta){\delta}C^{-1}   \\ 0 & \delta \end{pmatrix} +  \begin{pmatrix} 0 & -C^* \\ C & 0 \end{pmatrix}\Big) U_\delta= F_\delta,
  \]
  where $U_\delta$ and $F_\delta$ has been introduced in  \Cref{lem:zne0}. Next, since
  \[
      \iu \Im z + \begin{pmatrix} 0 & -C^* \\ C & 0 \end{pmatrix}
  \]
  is skew-selfadjoint, we deduce that 
  \[
     B_\delta \coloneqq   \Big(z \begin{pmatrix} 1 & 0 \\ 0 & 1 \end{pmatrix} + \begin{pmatrix} (\gamma-\delta) & (\gamma-\delta){\delta}C^{-1}   \\ 0 & \delta \end{pmatrix} +  \begin{pmatrix} 0 & -C^* \\ C & 0 \end{pmatrix}\Big)
  \]
  is continuously invertible as 
  \[
    \Re B_\delta\geq \Re z + \min\{\tilde{c},\frac{\delta}{2}\}\geq d>0;
  \]
  the inverse satisfies
  \[
      \|B_\delta^{-1}\|\leq \frac{1}{d}.
  \]
  Hence,
  \begin{align*}
    \|U\| &\leq \max\{|(1+\delta/z)|^{-1},1\}\|U_\delta\| \\
    & \leq \frac{1}{d}\max\{|(1+\delta/z)|^{-1},1\} \|F_\delta\| \\
    & \leq \frac{1}{d}\max\{|(1+\delta/z)|^{-1},1\} \big((1+\|\gamma\|+\delta)\frac{\delta}{|z|}\|C^{-1}\|+(1+\frac{\delta}{|z|})\big)\|F\| \\
    & \leq \frac{1}{d}2 \big((1+\|\gamma\|+\delta)\|C^{-1}\|+2\big)\|F\|,
  \end{align*}
  providing the desired continuity estimate for the resolvent.
\end{proof}

\section{Proof of the main theorem}\label{sec:proof}

We shall prove the main theorem in the following form. We present the corresponding statement for classical solutions. The case for mild solutions follows from a (standard) continuity argument.

\begin{theorem}\label{thm:mt} Let $\alpha,\beta\in L(H_0)$ be selfadjoint and $\gamma\in L(H_0)$ satisfying the positive definiteness conditions:
\[
   \alpha\geq c, \quad \beta\geq c, \quad \Re \gamma \geq c.
\]
Let $C\colon \dom(C)\subseteq H_0\to H_1$ be densely defined, closed with closed range. Then there exists $\delta>0$ such that for $u_0 \in \dom(C)$ and $v_0 \in \dom(C^*)\cap \beta^{-1}\ran(C)$ and any $U\in C^1[0,\infty; H_0\times H_1)\cap C[0,\infty; \dom(C)\times \dom(C^*)$ satisfying $U(0)=(u_0,v_0)$ and
\[
    \big(\begin{pmatrix} \alpha & 0 \\ 0 & \beta \end{pmatrix} U\big)'(t) = \big( \begin{pmatrix} \gamma & 0 \\ 0 & 0 \end{pmatrix}+ \begin{pmatrix} 0 & -C^* \\ C & 0 \end{pmatrix}\big) U(t)\quad (t\in (0,\infty))
\]
we have
\[
   \|U(t)\|_{H_0\times H_1}\leq \e^{-\delta t}\|(u_0,v_0)\|.
\]
\end{theorem}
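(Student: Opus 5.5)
The plan is to reduce to $\alpha=\beta=1$, restrict the contraction semigroup to an invariant subspace on which the damping is effective, and then apply the Gearhart--Pr\"uss theorem (\Cref{thm:GP}) using the resolvent analysis of \Cref{sec:red} and \Cref{sec:fulldamp_abs}.

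\emph{Step 1: reduction to nice coefficients.} By \Cref{lem:ref} and \Cref{rem:easy}, passing to $\tilde U=\diag(\sqrt{\alpha},\sqrt{\beta})U$ turns the equation into one with $\alpha=\beta=1$. The new operator $\tilde C=\sqrt{\beta}^{-1}C\sqrt{\alpha}^{-1}$ is still densely defined and closed with closed range. The new damping $\tilde\gamma=\sqrt{\alpha}^{-1}\gamma\sqrt{\alpha}^{-1}$ is still strictly accretive. The initial datum satisfies $\tilde v_0\in\ran(\tilde C)$. Stability estimates transfer back and forth by \Cref{rem:easy}(e). So I assume \Cref{hyp:fne0p} and \Cref{hyp:clr}.

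\emph{Step 2: invariant subspace.} Let $X\coloneqq H_0\times\ran(C)$, a closed subspace by \Cref{hyp:clr}. I claim $X$ is invariant under the contraction semigroup generated by $B=-\begin{pmatrix}\gamma&0\\0&0\end{pmatrix}-\begin{pmatrix}0&-C^*\\C&0\end{pmatrix}$ from \Cref{prop:genthm} (with the sign convention of \cref{eq:absMax}). For classical solutions, $v'=-Cu\in\ran(C)$, so $v(t)=v_0-\int_0^t Cu\in\ran(C)$ because $\ran(C)$ is closed. Equivalently, $(z-B)^{-1}$ maps $X$ into $X$ for large real $z$, since the second component of the resolvent equation reads $zv=g-Cu$. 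Hence the part $B_X$ of $B$ in $X$ is again m-dissipative. I then check $\iu\R\subseteq\rho(B_X)$ and that $\sup_{\lambda\in\R}\|(\iu\lambda-B_X)^{-1}\|<\infty$.

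\emph{Step 3: uniform resolvent bound.} For $f\in H_0$, $g\in\ran(C)$ and $\Re z>-c/2$, \Cref{thm:soldecom} and \Cref{thm:diag} decouple $(z-B_X)(u,v)=(f,g)$ into two problems:
\begin{itemize}
\item the scalar-type problem $(z+\kappa_0^*\gamma\kappa_0)U_3=F_3$, bounded by \Cref{rem:rezc}(a);
\item a $2\times 2$ system with off-diagonal operator $\hat C\coloneqq\iota_1^*C\iota_0\colon\ran(C^*)\to\ran(C)$, which is closed, one-to-one and onto, and with damping
\[
\gamma_z\coloneqq\iota_0^*\gamma\iota_0-\iota_0^*\gamma\kappa_0(z+\kappa_0^*\gamma\kappa_0)^{-1}\kappa_0^*\gamma\iota_0 .
\]
\end{itemize}
The transformations $T_1(z)^{\pm1}$ and $T_2(z)^{\pm1}$ are uniformly bounded by \Cref{rem:rezc}(b). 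By \Cref{lem:topleft}, $\Re\gamma_z\geq c/2$, and $\|\gamma_z\|\leq\|\gamma\|+2\|\gamma\|^2/c$, uniformly in $z$.

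\emph{Main obstacle.} \Cref{thm:resB} is stated for a fixed $\gamma$, whereas $\gamma_z$ depends on $z$. I would rerun its proof with $\gamma$ replaced by $\gamma_z$ and track the constants. \Cref{lem:zne0} and \Cref{lem:realpartest} only use $\Re\gamma_z\geq c/2$ and the bound on $\|\gamma_z\|$, so $\delta$, $\tilde c$ and $d$ can be chosen independently of $z$. The only delicate point is the neighbourhood of $z=0$, where the proof used openness of $\rho(B)$ for a single operator. I would replace this by an explicit perturbation bound. \Cref{lem:block} gives
\[
\|(-B_0)^{-1}\|\leq K
\]
with $K$ depending only on $\|\hat C^{-1}\|$ and $\sup_z\|\gamma_z\|$. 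Moreover, $z\mapsto\gamma_z$ is Lipschitz near $0$ by the resolvent identity. A Neumann series argument then gives a uniform ball around $0$ with uniformly bounded inverses. Away from this ball, the estimate in the proof of \Cref{thm:resB} holds uniformly because $|z|$ is bounded below. This yields $\sup_{\lambda\in\R}\|(\iu\lambda-B_X)^{-1}\|<\infty$, and \Cref{thm:GP} gives
\[
\|U(t)\|\leq M\e^{-\delta t}\|(u_0,v_0)\|
\]
for data in $X$.

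\emph{Remaining gap.} Obtaining the constant $M=1$ exactly as stated does not follow from Gearhart--Pr\"uss. I expect this is either intended up to a constant, or would require passing to an equivalent norm on $X$ that is adapted to the change of variables in \Cref{lem:zne0}; I would first check whether such a Lyapunov-type norm can be read off from the time-domain version of that transformation.
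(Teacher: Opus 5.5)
Your proposal follows the paper's route. The paper also reduces to $\alpha=\beta=1$ via \Cref{lem:ref}, applies Gearhart--Pr\"uss on $H_0\times\ran(C)$, uses the decoupling of \Cref{thm:diag}, and handles the invertible off-diagonal block with \Cref{thm:resB}.

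You are more careful than the paper where it is terse. You justify invariance of $H_0\times\ran(C)$. You note that \Cref{lem:topleft} gives only $\Re\gamma_z\geq c/2$ on $\Re z>-c/2$. You also make the $z=0$ step of \Cref{thm:resB} uniform, which matters because that step uses openness of $\rho(B)$ for one fixed operator while $\gamma_z$ varies.

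You can simplify that last step: the Lipschitz continuity of $z\mapsto\gamma_z$ is not needed. \Cref{lem:block} already gives $\|(\diag(\gamma_z,0)+\mathcal{K})^{-1}\|\leq K$ uniformly in $z$, where $\mathcal{K}=\begin{pmatrix}0&-\hat C^*\\ \hat C&0\end{pmatrix}$. A Neumann series in the scalar $z$ alone then covers $|z|<1/K$.

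Your ``remaining gap'' is not a deficit relative to the paper. Its proof also goes through \Cref{thm:GP} and so only yields $\|U(t)\|\leq M\e^{-\delta t}\|(u_0,v_0)\|$. In fact the version with $M=1$ is false whenever $C\neq 0$. Take $\alpha=\beta=1$, $u_0=0$ and $0\neq v_0\in\dom(C^*)\cap\ran(C)$; this set is dense in $\ran(C)$, since projecting $\dom(C^*)$ onto $\ran(C)=\ker(C^*)^\perp$ stays inside $\dom(C^*)$. For the classical solution, the derivative of $t\mapsto\|U(t)\|^2$ at $t=0$ equals $-2\Re\langle\gamma u_0,u_0\rangle=0$. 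But $\|U(t)\|\leq\e^{-\delta t}\|U(0)\|$ for all $t\geq0$ would force that derivative to be at most $-2\delta\|v_0\|^2<0$.

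So a Lyapunov-type equivalent norm can give $M=1$ only in that new norm, never in $\|\cdot\|_{H_0\times H_1}$. The theorem has to be read with a constant $M\geq 1$, and that is exactly what your argument (and the paper's) proves.
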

\begin{proof}[Proof of \Cref{thm:mt} and \Cref{thm:expstab}] Let $(u_0,v_0)$ and $U$ as in the theorem statement. Then, by \Cref{lem:ref}, $\tilde{U}\coloneqq \diag(\sqrt{\alpha},\sqrt{\beta})U$ satisfies 
\[
\tilde{U}'(t) = -\big(\begin{pmatrix} \sqrt{\alpha}^{-1}\gamma\sqrt{\alpha}^{-1} & 0 \\ 0 & 0 \end{pmatrix}+ \begin{pmatrix} 0 & -\sqrt{\alpha}^{-1}C^*\sqrt{\beta}^{-1} \\ \sqrt{\beta}^{-1}C\sqrt{\alpha}^{-1} & 0 \end{pmatrix}\big)\tilde{U}(t).
\]
and $\tilde{U}(0)= \diag(\sqrt{\alpha},\sqrt{\beta})(u_0,v_0)$. Since 
\[
B\coloneq -\big(\begin{pmatrix} \sqrt{\alpha}^{-1}\gamma\sqrt{\alpha}^{-1} & 0 \\ 0 & 0 \end{pmatrix}+ \begin{pmatrix} 0 & -\sqrt{\alpha}^{-1}C^*\sqrt{\beta}^{-1} \\ \sqrt{\beta}^{-1}C\sqrt{\alpha}^{-1} & 0 \end{pmatrix}\big)
\] generates a $C_0$-semigroup of contractions by \Cref{rem:easy} (a), (b) and \Cref{prop:genthm}, we infer $\tilde{U}(t)=T(t)\diag(\sqrt{\alpha},\sqrt{\beta})(u_0,v_0)$ for all $t\geq 0$, where $T$ is the semi-group generated by $B$. 

In order to assess whether $T$ is exponentially stable, we apply \Cref{thm:GP}. For this, by  \Cref{rem:easy} (c), the range of $D\coloneqq \sqrt{\beta}^{-1}C\sqrt{\alpha}^{-1}$ is closed (and so is the one of $D^*=\sqrt{\alpha}^{-1}C^*\sqrt{\beta}^{-1}$). By \Cref{rem:easy} (d), $\tilde{U}(0) \in \dom(D)\times \dom(D^*)\cap \ran(D)$. Thus, we need to estimate the resolvent of
\[
  \tilde{B}\coloneqq - \begin{pmatrix} \sqrt{\alpha}^{-1}\gamma\sqrt{\alpha}^{-1} & 0 \\ 0 & 0 \end{pmatrix} - \begin{pmatrix} 0 & -D^* \\ D & 0 \end{pmatrix} \text{ on }H_0 \times \ran(D).
\]
Now, by \Cref{thm:diag}, $z-\tilde{B}$ may be diagonalised in such a way (with invertible transformations with norm bounds uniformly bounded in $z$ as long as $\Re z>-\frac{1}{2}c$) that the transformed operator of $z-\tilde{B}$ reads
\[
\Big(z \begin{pmatrix}1 & 0 & 0 \\ 0 & 1& 0\\ 0 & 0 & 1 \end{pmatrix}+
\begin{pmatrix} {\tilde{\gamma}}^1_z & 0& 0 \\ 0 & 0& 0 \\ 0 & 0 & {\tilde{\gamma}}^2 \end{pmatrix} \\ +\begin{pmatrix}0 & -\tilde{D}^*& 0 \\   \tilde{D} & 0&0 \\ 0 & 0 & 0 \end{pmatrix}\Big).
\]
with $\tilde{D}$ being one-to-one and onto, closed and densely defined and $\Re {\tilde{\gamma}}^1_z \geq c$ and $\Re {\tilde{\gamma}}^2 \geq c$ by \Cref{lem:topleft}. Thus, the claim follows from \Cref{rem:rezc} (a) and \Cref{thm:resB}.
\end{proof}

\begin{remark}\begin{enumerate}
\item[(a)] As a by-product of the proof, one can show that the evolutionary equation
\[
\big(\partial_{t,\nu}\begin{pmatrix} \alpha & 0 \\ 0 & \beta \end{pmatrix}+\begin{pmatrix} \gamma & 0 \\ 0 & 0\end{pmatrix} +\begin{pmatrix} 0 & -C \\ C^* & 0\end{pmatrix} \big)U = F \in L_{2,\nu}(\R;H_0\times \beta^{-1}\ran(C))
\]
is exponentially stable, see the part on hyperbolic type equations in \cite[Section 11.4]{STW22} and \cite{Tro15}, see also \cite{Tro13} for the details. Note that the rationale provided in \cite{STW22} also applies to first order systems even though it has only been formulated for second order type equations.
\item[(b)] Since the resolvent bounds in the proof of \Cref{thm:resB} do not use that $\gamma$ is independent of $z$ as long as it is bounded in $z$ an immediate consequence is that this rationale gives reason to generalisations.\qedhere
\end{enumerate}
\end{remark}
In order to quickly present the result drawn from the latter remark for evolutionary equations, we refer to \cite[Chaper 11]{STW22} for the notational details in general and present the theorem as follows; see, in particular, \cite{Tro15}, see also \cite{Tro13}.
\begin{theorem}\label{thm:expstabS} Let $C\colon \dom(C)\subseteq H_0\to H_1$ be densely defined, closed with closed range. Let $c>0$. Assume $c\leq M_0=M_0^*\in L(H_0)$.  Let $M_1$ be a material law and define $M(z)\coloneqq M_0+z^{-1}M_1(z)$. Assume that there exists $\nu_0>0$ such that $\C_{\Re>-\nu_0}\setminus \dom(M)$ is discrete and for all $z\in \dom(M)$
\[
    \Re zM(z) \geq c \quad (z\in \C_{\Re>-\nu_0}).
\]Then, there exists $\mu>0$ such that for any $(f,g)\in L_{2,\nu}(\R;H_0\times \ran(C))\cap L_{2,-\mu}(\R;H_0\times \ran(C))\eqqcolon L_{2,\nu}\cap L_{2,-\mu}(\R;H_0)$ the unique solution $(u,q) \in L_{2,\nu}(\R;H_0)$ of
\[
    \big( \partial_{t,\nu} \begin{pmatrix} M_0 & 0 \\ 0 & 1\end{pmatrix} + \begin{pmatrix} M_1(\partial_{t,\nu}) & 0 \\ 0 & 0 \end{pmatrix} + \begin{pmatrix} 0& -C^* \\ C & 0 \end{pmatrix}\big)\begin{pmatrix} u \\ q \end{pmatrix} =\begin{pmatrix} f \\ 0 \end{pmatrix}
\]
satisfies $(u,q) \in L_{2,-\mu}(\R;H_0\times \ran(C))$.
\end{theorem}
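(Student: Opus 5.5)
Within the evolutionary-equations framework of \cite[Chapter 11]{STW22}, exponential stability of the solution operator reduces to a Paley--Wiener argument. The solution is $(u,q)=\mathcal{L}_\nu^*S(z)\mathcal{L}_\nu(f,g)$ with
\[
S(z)=\Big(zM(z)\oplus z + \begin{pmatrix} 0 & -C^*\\ C & 0\end{pmatrix}\Big)^{-1},
\]
where $zM(z)\oplus z$ abbreviates $\diag(zM(z),z)$ and $\mathcal{L}_\nu$ is the Fourier--Laplace transform. It then suffices to show two things:
\begin{enumerate}
\item[(1)] $S$ extends holomorphically to a half-plane $\C_{\Re>-\mu}$, restricted to data in $H_0\times\ran(C)$;
\item[(2)] $\sup_{z\in\C_{\Re>-\mu}}\|S(z)\|<\infty$.
\end{enumerate}
Given these, the Paley--Wiener theorem shows that the solution agrees with the one obtained in $L_{2,-\mu}$, using that the data lie in $L_{2,\nu}\cap L_{2,-\mu}$ and that solutions are independent of $\nu$ on the common holomorphy domain. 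So the plan is to prove a uniform resolvent bound on a strip left of the imaginary axis, exactly as in \Cref{thm:resB}, but with $\gamma$ replaced by the $z$-dependent operator $\gamma(z)\coloneqq zM(z)-z=z(M_0-1)+M_1(z)$.

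I first normalise $M_0$ to $1$, as in \Cref{lem:ref}, by conjugating with $\diag(\sqrt{M_0}^{-1},1)$ and replacing $C$ by $C\sqrt{M_0}^{-1}$. \Cref{rem:easy} shows this preserves closed range and the lower bound, up to changing $c$. After normalisation the operator is $z+\gamma(z)+A$ with $\gamma(z)=\diag(M_1(z),0)$ and $A$ skew-selfadjoint. Since $\Re zM(z)\ge c$ on $\C_{\Re>-\nu_0}$, I obtain $\Re(z+M_1(z))\ge c$ on the $H_0$-component. The hypothesis on $M_1$ also requires $\gamma(z)$ to be uniformly bounded on a small strip; this is implicit in the notion of material law, since $M_1$ is bounded and holomorphic on a right half-plane and, I would assume, also on $\C_{\Re>-\nu_0}$ away from the discrete set.

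Next I apply the reduction of \Cref{thm:soldecom} and \Cref{thm:diag} pointwise in $z$, replacing $z+\gamma$ by $z+M_1(z)$. The $\ker(C)$-block becomes $(z+\kappa_0^*M_1(z)\kappa_0)^{-1}$, bounded by $1/c$. The Schur complement on $\ran(C^*)$ has real part $\ge c$ by \Cref{lem:pddtw}. The transformations $T_1(z),T_2(z)$ have norms bounded by $1+\|M_1(z)\|/c$, uniformly on the strip.

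For the remaining block, with $\tilde C$ one-to-one and onto, I repeat the change of variables from \Cref{lem:zne0} and \Cref{lem:realpartest}, with the $z$-dependent $\tilde\gamma_z$ in place of $\gamma$. As the paper notes, the estimates only use $\Re\tilde\gamma_z\ge c$ and a bound $\|\tilde\gamma_z\|\le K$, so a uniform $\delta$ and the uniform bound $\|B_\delta^{-1}\|\le 1/d$ survive. Near $z=0$ I use \Cref{lem:block}: the block inverse is bounded by a constant depending only on $\|\tilde C^{-1}\|$ and $K$. This gives uniform bounds on a small disc, and combining with the region $|z|\ge$ radius gives the bound on the whole strip $\C_{\Re>-\mu}$.

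The main obstacle is the discrete exceptional set $\C_{\Re>-\nu_0}\setminus\dom(M)$ and holomorphy across it. I would argue that the hypothesis $\Re zM(z)\ge c$ gives the uniform bound on $S(z)$ near each exceptional point. By Riemann's removable singularity theorem, $S$ then extends holomorphically there, or these points are avoided altogether by choosing $\mu<\nu_0$ small.

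Independence of the solution from $\nu$, i.e.\ causality, together with the Paley--Wiener isometry then yields $(u,q)\in L_{2,-\mu}(\R;H_0\times\ran(C))$. That the second component lies in $\ran(C)$ follows because $\ran(C)$ is invariant under the solution operator when $g\in\ran(C)$, as shown in \Cref{thm:soldecom}.
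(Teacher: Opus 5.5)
Your proposal follows the route the paper indicates: part (b) of the remark after \Cref{thm:mt} says the reduction of \Cref{thm:diag} and the change-of-variables estimate of \Cref{thm:resB} survive for a $z$-dependent damping that stays bounded (the same uniform bound on $M_1$ over the strip that you assume), and the conclusion then comes from the Paley--Wiener framework of \cite[Chapter 11]{STW22}, as in your plan. The one point to fix is the exceptional set: shrinking $\mu$ cannot avoid singular points with $\Re z\geq 0$, so drop that alternative and keep the removable-singularity route, which works because $z\mapsto\langle (zM_0+M_1(z))x,x\rangle$ has real part at least $c\|x\|^2$ near such a point, hence has neither a pole nor an essential singularity, so by polarisation $M_1$ extends holomorphically there with the same accretivity bound.
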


\section{An application to Maxwell's equations with full damping}\label{sec:fulldamping}

For convenience of the reader we present a small example concerning Maxwell's equations next. 
Let $\Omega\subseteq \R^3$ be open and introduce
\[
   \curl \colon \dom(\curl)\subseteq L_2(\Omega)^3 \to L_2(\Omega)^3,
\]
 the distributional $\curl$-operator with maximal domain in $L_2(\Omega)^3$. Note that this operator is densely defined and closed; we put
\[
   \curl_0\coloneqq \curl^*.
\]Let $c\leq \varepsilon=\varepsilon^*,\mu=\mu^*\in L(L_2(\Omega)^3)$ and $\sigma \in L(L_2(\Omega)^3)$ such that $\Re \sigma\geq c$ for some $c>0$.
\begin{hypothesis}\label{hyp:closedrange}
Assume that $\Omega$ is such that $\ran(\curl)\subseteq L_2(\Omega)^3$ is closed.
\end{hypothesis}
\begin{remark} By recent results in \cite{PW26}, apart from conditions like $\Omega$ to be a bounded weak Lipschitz domain, in which case $\dom(\curl)\cap \ker(\curl)\hookrightarrow L_2(\Omega)^3$ is compact  (see \cite{Pi84}), the closed range condition can be achieved for instance for particular convex domains that are bounded in 2 directions; that is, convex domains being contained in a possibly doubly infinite cylinder with bounded cross-section.
\end{remark}
\begin{theorem}\label{thm:expstabMax} Assume \Cref{hyp:closedrange}. Then there exists $\rho>0$ such that any classical solution of
\[
\partial_t \begin{pmatrix} \varepsilon & 0 \\ 0 & \mu \end{pmatrix}(u,q) =-\begin{pmatrix} \sigma & 0 \\ 0 & 0 \end{pmatrix}(u,q)- \begin{pmatrix} 0 & -\curl \\ \curl_0 & 0 \end{pmatrix}\big)(u,q)\]
with $(u(0),q(0))\in \dom(\curl_0)\times \dom(\curl)\cap \mu^{-1}\ran(\curl)$
decays exponentially\end{theorem}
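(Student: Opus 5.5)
Theorem \ref{thm:expstabMax} will be obtained by specialising \Cref{thm:mt} to Maxwell's equations. We set $H_0=H_1=L_2(\Omega)^3$ and $C\coloneqq \curl_0$. Since $\curl$ is densely defined and closed, $\curl_0=\curl^*$ is densely defined and closed as well, and $C^*=\curl_0^*=\curl^{**}=\curl$. The system in the statement then takes exactly the form treated in \Cref{thm:mt}:
\[
\partial_t\begin{pmatrix}\varepsilon&0\\0&\mu\end{pmatrix}(u,q) = -\begin{pmatrix}\sigma&0\\0&0\end{pmatrix}(u,q) - \begin{pmatrix}0&-C^*\\C&0\end{pmatrix}(u,q),
\]
with $\alpha=\varepsilon$, $\beta=\mu$ and $\gamma=\sigma$. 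Here we should double-check that the sign conventions match the formulation in \Cref{thm:mt}. Any global sign discrepancy in the skew part is harmless: replacing $C$ by $-C$ changes neither ranges, domains nor closed range properties. The positivity hypotheses $\varepsilon,\mu\geq c$ and $\Re\sigma\geq c$ are assumed in the statement.

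The main point is the closed range condition. \Cref{hyp:closedrange} asserts that $\ran(\curl)=\ran(C^*)$ is closed. By the closed range theorem (part (a) of the remark following \Cref{hyp:clr}), this is equivalent to $\ran(C)=\ran(\curl_0)$ being closed. Hence $C$ has closed range, as required in \Cref{thm:mt}.

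It remains to match the initial data condition. \Cref{thm:mt} requires $v_0\in\dom(C^*)\cap\beta^{-1}\ran(C)=\dom(\curl)\cap\mu^{-1}\ran(\curl_0)$. The statement, however, is phrased with $\mu^{-1}\ran(\curl)$. This is the step I expect to need care: I would reread the roles of $C$ and $C^*$ in the statement. If the intended identification is rather $C=\curl$, then $\dom(C)=\dom(\curl)$ should host the first component, while the statement has $u(0)\in\dom(\curl_0)$. So we should decide which identification makes both the domain and range conditions consistent, and if necessary swap the roles of $(u,q)$ in the block structure. The cleanest choice appears to be $C=\curl$, with the operator block written as $\begin{pmatrix}0&-\curl\\ \curl_0&0\end{pmatrix}$; then $C^*=\curl_0$ and the range condition on $q(0)$ reads $\mu^{-1}\ran(\curl)=\beta^{-1}\ran(C)$. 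Up to this bookkeeping, and noting that a sign or transpose in the skew block does not affect skew-selfadjointness nor any of the estimates of \Cref{sec:fulldamp_abs}, \Cref{thm:mt} applies.

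Applying \Cref{thm:mt} then gives some $\delta>0$ with $\|(u,q)(t)\|\leq \e^{-\delta t}\|(u(0),q(0))\|$ for every classical solution. Setting $\rho\coloneqq\delta$ yields the exponential decay claimed in Theorem \ref{thm:expstabMax}.
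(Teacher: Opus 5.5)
Your first identification, $C=\curl_0$ with $C^*=\curl$, is the correct one and is the paper's route: its proof is a one-line appeal to \Cref{thm:mt}. Your closed range step via the closed range theorem is also fine. The gap is in how you then dispose of the mismatch in the range condition.

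Your ``cleanest choice'' $C=\curl$ with skew block $\begin{pmatrix}0&-\curl\\ \curl_0&0\end{pmatrix}$ produces the block $\begin{pmatrix}0&-C\\ C^*&0\end{pmatrix}$. That is not of the form $\begin{pmatrix}0&-C^*\\ C&0\end{pmatrix}$ required in \Cref{thm:mt}. To apply the theorem you must call the lower-left entry $D\coloneqq C^*=\curl_0$, and its hypothesis then reads $q(0)\in\mu^{-1}\ran(D)=\mu^{-1}\ran(\curl_0)$ again.

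So the transpose is not mere bookkeeping. The second row says $\mu q'=-\curl_0 u$. Hence the component of $\mu q$ in $\ker(\curl)$, with respect to $L_2(\Omega)^3=\ran(\curl_0)\oplus\ker(\curl)$, is conserved along the flow. This is exactly why \Cref{thm:soldecom} needs $v\in\ran(C)$ for the lower-left operator $C$, and no relabelling moves the condition to $\ran(\curl)$.

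In fact the condition $\mu^{-1}\ran(\curl)$ as printed cannot be proved. Since $\curl_0\subseteq\curl$, we have $\ran(\curl_0)\subseteq\ran(\curl)$, and the inclusion can be strict. Take the unit ball, which satisfies \Cref{hyp:closedrange}, with $\varepsilon=\mu=\sigma=1$, $u(0)=0$ and $q(0)=e_1=\curl(\tfrac12 e_1\times x)\in\dom(\curl)\cap\ran(\curl)$, where $x$ is the position variable. Since $\curl e_1=0$, the pair $(u,q)(t)=(0,e_1)$ is a classical solution that does not decay.

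So the statement contains a typo and should read $q(0)\in\mu^{-1}\ran(\curl_0)$. With that correction, your original identification $C=\curl_0$ together with \Cref{thm:mt} completes the proof. You should have kept that identification and flagged the discrepancy, rather than trying to absorb it by swapping the roles of $\curl$ and $\curl_0$.
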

\begin{proof}
The proof is a direct consequence of \Cref{thm:mt}.
\end{proof}
%

\end{document}